\newif\ifarrows 
\newcommand{\remove}[1]{}
\begin{document}
\newtheorem{theorem}{Theorem}[section]
\newtheorem{lemma}[theorem]{Lemma}
\newtheorem{proposition}[theorem]{Proposition}
\newtheorem{corollary}[theorem]{Corollary}
\newtheorem{remark}[theorem]{Remark}

\theoremstyle{definition}
\newtheorem{defn}[theorem]{Definition}

\newcommand{\N}{{\mathbb N}}
\newcommand{\R}{{\mathbb R}}

\newcommand{\E}{\mathbb E}
\newcommand{\V}{\mathrm{Var}}
\newcommand{\Prob}{\mathbb{P}}

\newcommand{\Ind}{\ensuremath{\textbf{1}}}
\newcommand{\bo}{\ensuremath{\mathrm{O}}}

\newcommand{\GM}{\mathfrak{D}}	
\newcommand{\RG}{\mathcal{I}}	

\newcommand{\ds}[1]{\ensuremath{\textbf{d}_{#1}}}
\newcommand{\md}[2]{\ensuremath{m_{#1}({#2})}} 
\newcommand{\dsec}{\ensuremath{d^{(2)}}}	

\newcommand{\SL}{\mathfrak{s}}	
\newcommand{\TL}{\mathfrak{t}}	
\newcommand{\CL}{\mathfrak{c}}	

\newcommand{\ff}[2]{\langle #1 \rangle_{#2}}	
\newcommand{\comp}[2]{{#1}^{#2}}	

\title{A limit theorem for the six-length of random functional graphs with a fixed degree sequence } 
\author{ Kevin Leckey\thanks{Current address: Fakult\"at Statistik,  Technische Universit\"at Dortmund, Germany}\phantom{z}\ and  Nicholas Wormald\thanks{Supported by an ARC Australian Laureate Fellowship.}  \\ School of Mathematical Sciences \\ Monash University
} 

\maketitle

 \begin{abstract}
We obtain results on the limiting distribution of the six-length of a random functional graph, also called a   functional digraph or random mapping, with given in-degree sequence. The six-length  of a vertex $v\in V$  is defined from the associated mapping, $f:V\to V$, to be the maximum $i\in V$ such that the elements $v, f(v), \ldots, f^{i-1}(v)$ are all distinct.  This has relevance to the study of algorithms for integer factorisation. 
\end{abstract}

\section{Introduction}\label{sec:intro}

We consider random directed graphs with all out-degrees equal to 1, which we call {\em functional graphs} (see Section~\ref{s:defs} for further notation) or random mappings. 
The motivation in most of the related literature is a better understanding of Pollard's $\rho$-algorithm \cite{Po75} for integer factorisation, or the improved version by Brent and Pollard \cite{BrPo81}. The runtime depends on the six-length (also called $\rho$-length) of a polynomial in $\mathbb{F}_p [x]$. (Pollard's first version used $x^2-1$.) Under the assumption that a polynomial mod $p$ `behaves like' a random mapping (supported by some research listed below), we are interested in the six-length of random mappings in particular.  Martins and Panario~\cite{MP16} studied polynomials in $\mathbb{F}_p [x]$, in particular the six-length  in several random models. They found significance in the six-length of random polynomials with given in-degree sequence, and gave numerical results for several random models. Our main aim is to derive results on the six-length of random functional graphs with given in-degree sequence, to give a baseline for comparison with random polynomial models.

 Results pertinent to our study were obtained by Arney and Bender~\cite{AB82}, who were motivated by the study of random shift registers.  For a fixed  set $\mathcal{D}$, they considered a functional graph chosen uniformly at random among those with in-degrees in $\mathcal{D}$. They studied various properties such as the in-degrees of vertices, tree size, tail length and  six-length. They also obtained some information on the number of origins (vertices of in-degree 0), stopping short of being able to specify the number of origins. 
 Hansen and Jaworski~\cite{HJ08} considered a two-stage experiment: (1) Choose random indegrees $D_1,\ldots,D_n$ from an exchangeable probability distribution, (2) Choose a functional graph at random among graphs with indegrees $D_1,\ldots,D_n$.  They studied the number of cyclic vertices (vertices lying on a cycle) and of components, and component sizes.\remove{This was extended   in~\cite{HJ14} with regard to a (non-uniform) model that restricts indegrees to the values 1 and 2, but again falls well short of being able to specify the in-degree sequence.}
    
Our main results are stated in Section~\ref{s:defs} after some basic definitions. In particular we give the limiting distribution of the six-length for functional graphs with given indegree sequence, and also asymptotics for the moments of the distribution, as well as the joint distribution of the tail- and six-lengths. Proofs for the case that the second moment of the indegree sequence is ``large" are given in Section~\ref{sec_proofs}, and for the remaining case (except for some almost trivial cases) in Section~\ref{sec_small_sig}. See also Konyagin, Luca,  Mans, Mathieson, Sha~and Shparlinski~\cite{KLMSS16} for a study of polynomials over finite fields considering similar aspects, such as largest component and tree size of the associated functional digraphs. Similar to~\cite{MP16}, they  observe, in~\cite[Section 4]{KLMSS16}, that the in-degree sequence of these random digraphs is  distributed rather differently from that of uniformly random functional digraphs.

\section{Definitions, model and results} \label{s:defs}

\noindent
{\bf Functional Graphs.}
The functional graph of a function $f:V\rightarrow V$ is a directed graph $\mathcal{G}_f$ with vertex set $V$ and edge set $\{ (v,f(v)): v\in V\}$.  Consider, for example, the vertex set $V=\{0,\ldots, 4\}$ and the function $f(x)=x^2$ (mod $5$). Then $\mathcal{G}_f$ is given by   

\begin{figure}[h!]\centering

\begin{tikzpicture}[every edge/.append style={-{latex},very thick}] 

\node[fill=black,circle,draw] (1) at (0,0){};
\node[fill=black,circle,draw] (2) at (1,0){};
\node[fill=black,circle,draw] (3) at (2,0){};
\node[fill=black,circle,draw] (4) at (3,0){};
\node[fill=black,circle,draw] (5) at (-1,0){};

\path (1) edge [ out=210,in=120,looseness=9] (1);
\path (2) edge [out=30,in=150] (4);
\path (3) edge (4);
\path (4) edge [out=120,in=60] (1);
\path (5) edge [out=210,in=120,looseness=9] (5);

\node[draw=none,fill=none] (lab1) at (0,-0.4){$1$};
\node[draw=none,fill=none] (lab2) at (1,-0.4){$2$};
\node[draw=none,fill=none] (lab3) at (2,-0.4){$3$};
\node[draw=none,fill=none] (lab4) at (3,-0.4){$4$};
\node[draw=none,fill=none] (lab5) at (-1,-0.4){$0$};
\end{tikzpicture}
\end{figure}

\noindent
The six-length of a vertex in a functional graph is defined as follows:
Let $f:V\rightarrow V$ be a function and let $id$ denote the identity function on $V$. Let $\comp f k$ denote the $k$-times composition of $f$, that is $\comp f 0 =id$ and $\comp f k= \comp f {k-1} \circ f$ for $k\geq 1$.
The {\em six-length} of $v\in V$ is defined as
$$\SL_f(v)=\min\left\{ k\in\N : \comp f k (v) \in \{\comp f j (v) : 0\leq j\leq k-1\}\right\}.$$
An example for the six-length in a functional graph is given in \cref{fig:sixlength}. Note that $\SL_f(v)$ can be decomposed into the tail-length $\TL_f(v)$ and the cycle-length $\CL_f(v)$ as indicated in \cref{fig:TL_CL}. More formally, the tail-length is the unique integer that satisfies
\begin{align*}
\TL_f(v)<\SL_f(v)\quad \text{ and }\quad  \comp f {\SL_f(v)} (v)=\comp f {\TL_f(v)} (v),
\end{align*}
and the cycle-length is given by $\CL_f(v):=\SL_f(v)-\TL_f(v)$.\\

\begin{figure}\centering
\subfigure[{\color{red} $\SL(v)=11$}\label{fig:sixlength}]{
\begin{tikzpicture}[every edge/.append style={-{latex},very thick}] 

\node[fill=red,circle,draw] (a) at (1.4,3.4){};
\node[fill=red,circle,draw] (b) at (0.58,2.82){};
\node[fill=red,circle,draw] (c) at (0,2){};
\node[fill=red,circle,draw] (1) at (0,1){};
\node[fill=red,circle,draw] (2) at (0,0){};
\node[fill=red,circle,draw] (3) at (0.7,-0.7){};
\node[fill=red,circle,draw] (4) at (1.7,-0.7){};
\node[fill=red,circle,draw] (5) at (2.4,0){};
\node[fill=red,circle,draw] (6) at (2.4,1){};
\node[fill=red,circle,draw] (7) at (1.7,1.7){};
\node[fill=red,circle,draw] (8) at (0.7,1.7){};

\node[draw=none,fill=none] (lab) at (1.4,3){${\color{red} v}$};

\path (a) edge [color=red] (b);
\path (b) edge [color=red] (c);
\path (c) edge [color=red] (1);
\path (1) edge [color=red] (2);
\path (2) edge [color=red] (3);
\path (3) edge [color=red] (4);
\path (4) edge [color=red] (5);
\path (5) edge [color=red] (6);
\path (6) edge [color=red] (7);
\path (7) edge [color=red] (8);
\path (8) edge [color=red] (1);

\node[fill=black,circle,draw] (v1) at (2.4,3.4){};
\path (v1) edge (a);
\node[fill=black,circle,draw] (v2) at (-0.12,3.52){};
\path (v2) edge (b);
\node[fill=black,circle,draw] (v3) at (-0.42,2.82){};
\path (v3) edge (b);

\node[fill=black,circle,draw] (v4) at (2.6,1.8){};
\path (v4) edge (7);
\node[fill=black,circle,draw] (v5) at (3.3,1.1){};
\path (v5) edge (v4);
\node[fill=black,circle,draw] (v6) at (3.3,2.5){};
\path (v6) edge (v4);

\node[fill=black,circle,draw] (v7) at (2.92,-0.82){};
\path (v7) edge (5);
\node[fill=black,circle,draw] (v8) at (3.5,0){};
\path (v8) edge (v7);
\end{tikzpicture}
}
\hspace{1cm}
\subfigure[{\color{blue}$\TL(v)=3$}, {\color{ForestGreen}$\CL(v)=8$}\label{fig:TL_CL}]{
\begin{tikzpicture}[every edge/.append style={-{latex},very thick}] 

\node[fill=blue,circle,draw] (a) at (1.4,3.4){};
\node[fill=blue,circle,draw] (b) at (0.58,2.82){};
\node[fill=blue,circle,draw] (c) at (0,2){};
\node[fill=ForestGreen,circle,draw] (1) at (0,1){};
\node[fill=ForestGreen,circle,draw] (2) at (0,0){};
\node[fill=ForestGreen,circle,draw] (3) at (0.7,-0.7){};
\node[fill=ForestGreen,circle,draw] (4) at (1.7,-0.7){};
\node[fill=ForestGreen,circle,draw] (5) at (2.4,0){};
\node[fill=ForestGreen,circle,draw] (6) at (2.4,1){};
\node[fill=ForestGreen,circle,draw] (7) at (1.7,1.7){};
\node[fill=ForestGreen,circle,draw] (8) at (0.7,1.7){};

\node[draw=none,fill=none] (lab) at (1.4,3){${v}$};

\path (a) edge [color=blue] (b);
\path (b) edge [color=blue] (c);
\path (c) edge [color=blue] (1);

\path (1) edge [color=ForestGreen] (2);
\path (2) edge [color=ForestGreen] (3);
\path (3) edge [color=ForestGreen] (4);
\path (4) edge [color=ForestGreen] (5);
\path (5) edge [color=ForestGreen] (6);
\path (6) edge [color=ForestGreen] (7);
\path (7) edge [color=ForestGreen] (8);
\path (8) edge [color=ForestGreen] (1);

\node[fill=black,circle,draw] (v1) at (2.4,3.4){};
\path (v1) edge (a);
\node[fill=black,circle,draw] (v2) at (-0.12,3.52){};
\path (v2) edge (b);
\node[fill=black,circle,draw] (v3) at (-0.42,2.82){};
\path (v3) edge (b);

\node[fill=black,circle,draw] (v4) at (2.6,1.8){};
\path (v4) edge (7);
\node[fill=black,circle,draw] (v5) at (3.3,1.1){};
\path (v5) edge (v4);
\node[fill=black,circle,draw] (v6) at (3.3,2.5){};
\path (v6) edge (v4);

\node[fill=black,circle,draw] (v7) at (2.92,-0.82){};
\path (v7) edge (5);
\node[fill=black,circle,draw] (v8) at (3.5,0){};
\path (v8) edge (v7);
\end{tikzpicture}
}
\caption{An illustration of the six-, tail-, and cycle-length.}
\end{figure}
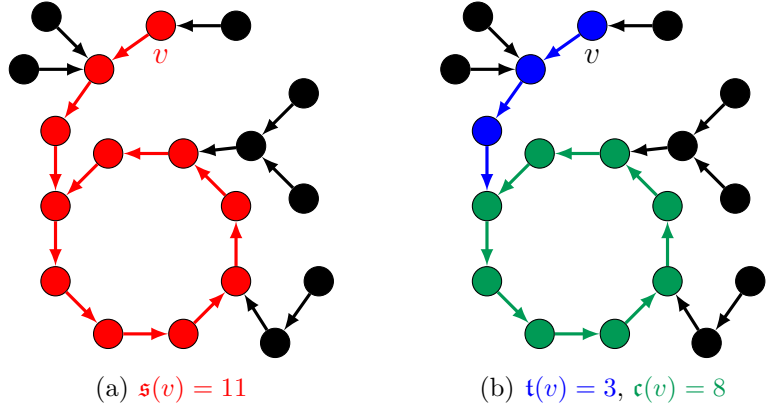

\noindent
{\bf Random Model.} Throughout the paper, a (finite) sequence $\ds n =(d_{n,1},\ldots,d_{n,n})$ is called degree sequence if
\begin{align}
\sum_{j=1}^n d_{n,j}&=n\quad \text{and}\quad \ds n \in\N_0^n. \label{a0}\tag{A0}
\end{align}
A random functional graph with degree sequence $\ds n$ is a graph $\mathcal{G}_F$ where $F$ is drawn uniformly at random from the set
\begin{align}\label{func_set}
\mathfrak{F}(\ds n):=\left\{f:[n]\rightarrow [n] : |f^{-1}(\{i\})|=d_{n,i}\text{ for all }i\in[n]\right\}.
\end{align}
 Here and elsewhere, we use  $[n]:=\{1,\ldots,n\}$. 
Note that technically what we call the degree sequence is the indegree sequence of the directed graph. This simplification is sensible because all outdegrees are 1.

Now let $\{\ds n : n\in\N\}$ be a family of degree sequences. Let $\SL_n(v)$ and $\TL_n(v)$ be six- and tail-length of a vertex $v\in[n]$ in a random functional graph with degree sequence $\ds n$. 
The aim of this paper is to investigate the asymptotic behaviour of $(\SL_n(v),\TL_n(v))$. 

We use the usual asymptotic  notation such as $\bo, \Omega, \Theta, o, \omega, \sim$;  in particular 
$a_n=\omega(b_n)$ if $b_n=o(a_n)$. Also, for any positive integers $n,k\in \N$ with $k\leq n$ let 
\begin{align*}
\ff n k:=n! /(n-k)!\,.
\end{align*}

\noindent
{\bf Degree sequences.}
For a degree sequence $\ds n =(d_{n,1},\ldots,d_{n,n})$ let
\begin{align}\label{def_Del_m_sigma}
\Delta(\ds n):=\max_j d_{n,j},\quad
\md k {\ds n}:=\sum_{j=1}^n d_{n,j}^k,\quad
\sigma^2(\ds n):=\frac {\md 2 {\ds n}} n  -1.
\end{align}
The parameter $\sigma^2(\ds n)$ is sometimes called the {\it coalescence}.

Throughout this section, let  $\{\ds n : n\in\N\}$ be a family of degree sequences and let $\{v_n: n\in\N\}$ be a family of vertices with $v_n\in [n]$. For the upcoming limit theorem for $\SL_n(v_n)$ we assume the following: 
\begin{align}
\sigma^2(\ds n)&=o(n)\quad\text{and}\quad \sigma^2(\ds n)=\omega\left(n^{-1}\right),\label{a1}\tag{A1}\\
\Delta( {\ds n})&=o\left(\sqrt{n \sigma^2(\ds n)}\right),\label{a2}\tag{A2}
\end{align}
\begin{theorem}\label{thm_sixlength}
Assume \eqref{a0}, \eqref{a1} and \eqref{a2}.
 Then $\left(\SL_n(v_n)/ \sqrt{n /\sigma^2(\ds n)}\right)_{n\geq 1}$ converges weakly to the   standard Rayleigh distribution, that is
\begin{align*}
\lim_{n\rightarrow\infty} \Prob\left(\SL_n(v_n)> x\sqrt{n /\sigma^2(\ds n)}\right)=\mathrm{e}^{-x^2/ 2},\quad x>0.
\end{align*}
\end{theorem}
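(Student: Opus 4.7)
The starting point is to obtain a closed-form expression for $\Prob(\SL_n(v_n)>k)$ at integer $k$, then substitute $k=\lfloor x\sqrt{n/\sigma^2(\ds n)}\rfloor$ and show convergence to $\mathrm{e}^{-x^2/2}$. Viewing each $f\in\mathfrak{F}(\ds n)$ as a uniform bijection between ``out-slots'' (one per vertex) and ``in-slots'' (vertex $i$ owning $d_{n,i}$ of them), modulo within-vertex permutations, a standard slot-counting argument yields
\begin{align*}
\Prob\bigl(\SL_n(v_n)>k\bigr) \;=\; \frac{1}{\ff n k}\sum_{(u_1,\ldots,u_k)} d_{n,u_1}\cdots d_{n,u_k},
\end{align*}
where the sum runs over ordered $k$-tuples of distinct vertices in $[n]\setminus\{v_n\}$.

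Introducing the size-biased law $\pi(w):=d_{n,w}/n$ and iid draws $X_1,\ldots,X_k$ from $\pi$, the identity $\sum_{(u_1,\ldots,u_k)\in[n]^k}\prod_i d_{n,u_i}=n^k$ rewrites the formula as
\begin{align*}
\Prob\bigl(\SL_n(v_n)>k\bigr) \;=\; \frac{n^k}{\ff n k}\cdot \Prob_{\pi}\bigl(X_1,\ldots,X_k\text{ distinct and all }\neq v_n\bigr).
\end{align*}
The first factor is classical: $\log(n^k/\ff n k)=k^2/(2n)+k^3/(6n^2)+\bo(k^4/n^3)$ for $k=o(n)$. For the ``birthday'' factor $q_k$, I would introduce the collision count
\begin{align*}
W \;:=\; \sum_{1\le i<j\le k}\mathbf{1}\{X_i=X_j\}+\sum_{i=1}^k\mathbf{1}\{X_i=v_n\},
\end{align*}
so that $q_k=\Prob(W=0)$, compute
\begin{align*}
\E[W] \;=\; \binom{k}{2}\frac{1+\sigma^2(\ds n)}{n}+\frac{k\,d_{n,v_n}}{n},
\end{align*}
where the second summand is $o(1)$ by the bound $d_{n,v_n}\le\Delta(\ds n)=o(\sqrt{n\sigma^2(\ds n)})$ from \eqref{a2}, and upgrade this to $\log q_k=-\E[W]+o(1)$ via the method of factorial moments. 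The leading contribution to $\E[W^{(r)}]$ comes from pair-disjoint collision configurations and asymptotically equals $(\E[W])^r$, yielding the Poisson limit for $W$; non-disjoint configurations produce terms of the form $k^\ell\md{\ell}{\ds n}/n^\ell$ with $\ell\ge 3$, which are controlled through the elementary inequality $\md{\ell}{\ds n}\le\Delta(\ds n)^{\ell-2}\md{2}{\ds n}$ together with \eqref{a2}.

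Combining the two factors gives $\log\Prob(\SL_n(v_n)>k) = -k^2\sigma^2(\ds n)/(2n)+o(1)$, which at $k=x\sqrt{n/\sigma^2(\ds n)}$ equals $-x^2/2+o(1)$, as required. The main technical obstacle is making this analysis uniform across the three regimes admitted by \eqref{a1}. When $\sigma^2(\ds n)\to 0$ the trajectory length $k$ grows nearly linearly in $n$ and the Poisson parameter $\E[W]$ diverges, so the higher-order corrections to both $\log(n^k/\ff n k)$ and $\log q_k$ are individually non-negligible and must be shown to cancel to the required order, which forces an expansion of $q_k$ beyond the leading Poisson approximation (for instance via the exact factorial-moment series). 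When $\sigma^2(\ds n)\to\infty$ the trajectory is short and collisions are rare, but a single very heavy vertex could skew the collision statistics --- an eventuality ruled out precisely by \eqref{a2}. The intermediate regime $\sigma^2(\ds n)=\Theta(1)$ is where the Poisson approximation is cleanest. Tying \eqref{a1} and \eqref{a2} together to produce an $o(1)$ error term uniformly across the three regimes is the crux of the argument.
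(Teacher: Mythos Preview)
Your exact formula for $\Prob(\SL_n(v)>k)$ is correct and coincides with the paper's starting point. Your rewriting as $(n^k/\ff n k)\cdot q_k$ with $q_k=\Prob_\pi(X_1,\ldots,X_k\text{ distinct and }\neq v_n)$ is legitimate and closely related to what the paper does; the paper instead expresses the symmetric sum $g_n(k)=\frac{k!}{\ff n k}\sum_{i_1<\cdots<i_k}\prod d_{i_j}$ as a ratio $\Prob(S_n=k)/\Prob(B_n=k)$ of point probabilities of a Bernoulli sum and a binomial, both with mean $\sim k$, and applies Chen--Stein to each. Because the Poisson parameter there is $\Theta(k)$ rather than $\Theta(k^2/n)$, the approximation delivers a multiplicative $1+o(1)$ directly. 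In the regime where $\sigma^2$ is bounded away from~$0$ (so $k=\bo(\sqrt n)$ and $\E[W]=\bo(1)$), your route via the collision count would also go through.

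The genuine gap is the regime $\sigma^2(\ds n)\to 0$. There $k\asymp\sqrt{n/\sigma^2}$ gives $\E[W]\sim x^2(1+\sigma^2)/(2\sigma^2)\to\infty$, so $q_k=\Prob(W=0)$ is exponentially small and has to match $\ff n k/n^k$ to within a multiplicative $1+o(1)$. Neither Chen--Stein nor the factorial-moment method yields this: both give an \emph{additive} error (a total-variation bound) that is polynomially small, hence enormous compared to the target $e^{-\E[W]}$. Invoking the exact identity $\Prob(W=0)=\sum_{r\ge 0}(-1)^r\E\binom{W}{r}$ does not help either, because when $\E[W]\to\infty$ the sum is dominated by $r$ of order $\E[W]$, and at that scale the ``pair-disjoint'' approximation $\E\binom{W}{r}\approx(\E W)^r/r!$ fails by a nontrivial multiplicative factor (the positive correlations among the indicators $\mathbf 1\{X_i=X_j\}$ matter there). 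So the cancellation you need cannot be extracted from a Poisson or moment heuristic. The paper in fact cannot push its direct analytic argument below $\sigma^2=\omega(n^{-1/3}\log n)$ either; it covers the remaining range by a separate combinatorial reduction that your outline is missing: contract $n-\hat n$ vertices of degree~$1$ (with $\hat n=\lfloor(n\sigma^2)^{4/3}\rfloor$) to obtain a functional graph on $\hat n$ vertices whose coalescence $\sigma^2(\ds{n,w})\sim\hat n^{-1/4}$ is large enough for the analytic argument, observe that re-inserting the contracted vertices is exactly a P\'olya urn on the edges, and use Azuma--Hoeffding on the urn martingale to transfer the limit law for the reduced six-length back to $\SL_n(v_n)$.
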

In fact the methods used to prove Theorem \ref{thm_sixlength} 
also yield the convergence of all moments for a wide range of degree sequences. More precisely, let 
\begin{align}
\sigma^2(\ds n)&=o\left(\frac n {(\log n)^3} \right)\quad\text{and}\quad \sigma^2(\ds n)=\omega\left(n^{-1}\right),\label{b1}\tag{B1}\\
\Delta (\ds n)&=o\left(\sqrt{\frac{n\sigma^2(\ds n)}{(\log n)^{3}}}\right).\label{b2}\tag{B2}
\end{align}
Then the convergence in Theorem \eqref{thm_sixlength} also holds with respect to all moments, that is:
\begin{theorem}\label{thm_moments}
Assume \eqref{a0}, \eqref{b1} and \eqref{b2}. Let $X$ be  standard Rayleigh distributed. Then
\begin{align*}
\lim_{n\rightarrow\infty} \E\left[\left(\frac{\SL_n(v_n)}{\sqrt{n /\sigma^2(\ds n)}}\right)^p\right]=\E[X^p],\quad p\geq 1.
\end{align*}
In particular, $\E[\SL_n(v_n)]\sim \sqrt{\frac {\pi n} {2\sigma^2(\ds n)}}$ and $\V(\SL_n(v_n))\sim \frac{4-\pi} {2\sigma^2(\ds n)} n $.
\end{theorem}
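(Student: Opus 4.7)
The plan is to upgrade the weak convergence in Theorem~\ref{thm_sixlength} to convergence of all moments by establishing uniform integrability of $Y_n^p$, where $Y_n:=\SL_n(v_n)/\sqrt{n/\sigma^2(\ds n)}$, for every $p\geq 1$. It suffices to prove $\sup_n\E[Y_n^q]<\infty$ for all $q\geq 1$, since then $\E[Y_n^p]\to\E[X^p]$ follows from the standard combination of weak convergence and uniform integrability. The asymptotics for $\E[\SL_n(v_n)]$ and $\V(\SL_n(v_n))$ stated in the theorem then fall out by specialising to $p=1,2$ and recalling $\E[X]=\sqrt{\pi/2}$ and $\V(X)=(4-\pi)/2$ for the standard Rayleigh distribution.

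First I would use the exact identity
\begin{equation*}
\Prob(\SL_n(v_n)>k)=\frac{1}{\ff{n}{k}}\sum_{(w_1,\ldots,w_k)}\prod_{i=1}^{k}d_{n,w_i},
\end{equation*}
where the sum ranges over all $k$-tuples of pairwise distinct vertices in $[n]\setminus\{v_n\}$; this comes from directly counting the maps $f\in\mathfrak{F}(\ds n)$ whose $k$-step orbit of $v_n$ is prescribed. An expansion of the logarithm identifies the leading behaviour as $\exp(-k^2\sigma^2(\ds n)/(2n))$, with multiplicative error terms controlled by $k^2\sigma^2(\ds n)/n$, $k\Delta(\ds n)/\sqrt{n\sigma^2(\ds n)}$ and powers of these. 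This is essentially the estimate driving Theorem~\ref{thm_sixlength}; what is new here is uniformity over a wider range of $k$.

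With $t_n:=C(\log n)^{3/2}$ for a fixed constant $C$ and $k=t\sqrt{n/\sigma^2(\ds n)}$, hypotheses \eqref{b1} and \eqref{b2} make each error term vanish uniformly in $t\in(0,t_n]$, yielding $\Prob(Y_n>t)\leq 2e^{-t^2/2}$ on that range once $n$ is large. I would then split
\begin{equation*}
\E[Y_n^q]=q\int_0^{t_n}t^{q-1}\Prob(Y_n>t)\,dt+q\int_{t_n}^{\sqrt{n\sigma^2(\ds n)}}t^{q-1}\Prob(Y_n>t)\,dt,
\end{equation*}
using the deterministic bound $\SL_n(v_n)\leq n$, equivalently $Y_n\leq\sqrt{n\sigma^2(\ds n)}$, to truncate the second integral. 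The first integrand is dominated by $2qt^{q-1}e^{-t^2/2}$, which is integrable over $(0,\infty)$; for the second, $\Prob(Y_n>t_n)\leq 2\exp(-C^2(\log n)^3/2)$ decays faster than any polynomial in $n$, so the contribution is $o(1)$. This delivers the required uniform moment bound, and dominated convergence on the first integral, combined with the pointwise limit $\Prob(Y_n>t)\to e^{-t^2/2}$ supplied by Theorem~\ref{thm_sixlength}, then yields $\E[Y_n^q]\to\E[X^q]$.

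The main obstacle is the first step: making the approximation of $\Prob(\SL_n(v_n)>k)$ uniform for $k$ as large as $t_n\sqrt{n/\sigma^2(\ds n)}$, rather than only for $k=\Theta(\sqrt{n/\sigma^2(\ds n)})$ as needed for Theorem~\ref{thm_sixlength}. This amounts to carefully tracking the $k$-dependence of each error term in the log-expansion of the distinct-tuple sum; the extra $(\log n)^3$ factor in \eqref{b1} and \eqref{b2} is precisely what is required to absorb the growth of these errors when $k$ can be as large as $(\log n)^{3/2}\sqrt{n/\sigma^2(\ds n)}$.
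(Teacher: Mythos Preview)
Your strategy---upgrade weak convergence to moment convergence via a uniform tail bound and uniform integrability---is exactly the paper's, and the decomposition of $\E[Y_n^q]$ into a bulk integral and a tail piece controlled by the deterministic bound $Y_n\le\sqrt{n\sigma^2}$ is the right move. Two issues, one minor and one substantial.

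\emph{Minor.} Your cut-off $t_n=C(\log n)^{3/2}$ is larger than needed and larger than what \eqref{b1}, \eqref{b2} actually buy. For $\sigma^2\ge 1$ the dominant multiplicative error in the expansion (in the paper's form, $k^{3/2}m_2/n^2$) scales like $t_n^{3/2}(\sigma^2/n)^{1/4}$, and \eqref{b1} gives $(\sigma^2/n)^{1/4}=o((\log n)^{-3/4})$; so the error is $o(1)$ precisely for $t_n=C\sqrt{\log n}$, not $(\log n)^{3/2}$. The paper takes $t_n=C_p\sqrt{\log n}$ with $C_p=\sqrt{2p}$, which is exactly enough to kill the tail term $n^p\Prob(Y_n>t_n)$.

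\emph{Substantial.} Even with the correct $t_n$, your direct expansion covers only part of the range of degree sequences allowed by \eqref{b1}, \eqref{b2}. The log/Poisson expansion behind the asymptotic $\Prob(\SL_n>k)\approx\exp(-k^2\sigma^2/(2n))$ requires $k=o\bigl((n^2/m_2)^{2/3}\bigr)$. When $\sigma^2$ is small---say $\sigma^2=\bo(n^{-1/3}\log n)$, which is compatible with \eqref{b1}, \eqref{b2}---one has $m_2\sim n$, so $(n^2/m_2)^{2/3}=\Theta(n^{2/3})$, while $\sqrt{n/\sigma^2}$ is already of order $n^{2/3}/\sqrt{\log n}$ or larger. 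Thus $k=C\sqrt{\log n}\,\sqrt{n/\sigma^2}$ falls \emph{outside} the range where the expansion is valid, and no choice of constants repairs this. The paper isolates this case explicitly (their condition \eqref{a+}: $\sigma^2=\omega(n^{-1/3}\log n)$) and runs your argument only under that extra hypothesis; for small coalescence it uses a completely different mechanism---contracting degree-$1$ vertices to produce a reduced graph whose coalescence does satisfy \eqref{a+}, then coupling the original six-length to the reduced one via a P{\'o}lya urn and transferring the moment bound through Azuma--Hoeffding. Your proposal is missing this second half.
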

Moreover, these assumptions also imply that the ratio between tail-length and six-length is asymptotically uniformly distributed. More precisely:
\begin{theorem}\label{thm_taillength} Let $X$ and $U$ be independent, $U$ be uniformly distributed on $[0,1]$, and $X$ be Rayleigh distributed. 
Assume \eqref{a0}, \eqref{b1} and \eqref{b2}. Then
\begin{align*}
\left(\frac{\SL_n(v_n)}{\sqrt{n /\sigma^2(\ds n)}}\, ,\,\frac{\TL_n(v_n)}{\sqrt{n /\sigma^2(\ds n)}}\right)\stackrel{d}{\longrightarrow} (X, UX).
\end{align*}
\end{theorem}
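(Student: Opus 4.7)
Set $a_n := \sqrt{n/\sigma^2(\ds n)}$. The plan is to reduce the joint limit to the Rayleigh limit of Theorem~\ref{thm_sixlength} by showing that, conditionally on $\SL_n(v_n)$ being of order $a_n$, the ratio $\TL_n(v_n)/\SL_n(v_n)$ converges in distribution to a uniform random variable on $[0,1]$ that is asymptotically independent of $\SL_n(v_n)$. Once this conditional statement is established uniformly in $s=\Theta(a_n)$, one has $(\SL_n(v_n)/a_n,\TL_n(v_n)/\SL_n(v_n)) \to (X,U)$ with $X$ Rayleigh and $U \sim \mathrm{Unif}[0,1]$ independent, and the continuous mapping theorem yields $(\SL_n(v_n)/a_n,\TL_n(v_n)/a_n)\to(X,UX)$; the moment convergence of Theorem~\ref{thm_moments} provides the tightness of $\SL_n(v_n)/a_n$ away from $0$ needed to restrict attention to the scale $\Theta(a_n)$.

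The conditional analysis is natural in a configuration-style representation where every vertex $j$ carries $d_{n,j}$ in-stubs and one out-stub, and $F$ corresponds to a uniformly random matching of out-stubs to in-stubs. I would reveal the trajectory $u_0 = v_n, u_1 = F(v_n), \ldots$ step by step by pairing the current out-stub with a uniformly random free in-stub. Writing
\begin{align*}
w_0 := d_{n,v_n}, \qquad w_j := d_{n,u_j} - 1 \quad (j\ge 1)
\end{align*}
for the numbers of in-stubs still free at the trajectory vertices just before the collision, the colliding in-stub is uniform over the $w_0+\cdots+w_{s-1}$ free trajectory-attached in-stubs, hence
\begin{align*}
\Prob\bigl(\TL_n(v_n)\le k \bigm| \SL_n(v_n)=s,u_0,\ldots,u_{s-1}\bigr) \;=\; \frac{\sum_{j=0}^{k} w_j}{\sum_{j=0}^{s-1} w_j}.
\end{align*}
Consequently it suffices to prove that the partial sums $W_k := \sum_{j=0}^{k-1} w_j$ satisfy $W_k = k\,\sigma^2(\ds n)\bigl(1+o_\Prob(1)\bigr)$ uniformly in $k \le C a_n$, since substituting $k = \lfloor us\rfloor + 1$ then gives $\Prob(\TL_n(v_n)/\SL_n(v_n) \le u \mid \SL_n(v_n) = s) \to u$ in probability.

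For the concentration, note that for $j\ge 1$ the next vertex $u_j$ is size-biased among the unexplored vertices, so that
\begin{align*}
\E\bigl[w_j \bigm| u_0,\ldots,u_{j-1}\bigr] \;=\; \frac{\md 2 {\ds n} - \sum_{i<j} d_{n,u_i}^2}{n - \sum_{i<j} d_{n,u_i}} - 1 \;=\; \sigma^2(\ds n)\bigl(1+o(1)\bigr)
\end{align*}
uniformly for $j=O(a_n)$, provided the corrections coming from vertices already visited are absorbed by (B1) and (B2). A Doob martingale on $W_k - \E W_k$ with increments bounded by $\Delta(\ds n)$, together with (B2), then yields the required uniform concentration. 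The main obstacle is executing this concentration under the tight slack that (B1) and (B2) actually permit: the $w_j$'s are neither independent nor exchangeable, the sampling is size-biased without replacement, and when $\sigma^2(\ds n)$ is small the bound $\Delta(\ds n)$ may be close to the budget afforded by (B2), so a case split mirroring the one between Sections~\ref{sec_proofs} and~\ref{sec_small_sig} for the marginal statements is likely unavoidable, with a tailored analysis in the small-$\sigma^2$ regime replacing the Azuma-type estimate used when the coalescence is large.
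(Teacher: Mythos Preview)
Your configuration-model formula for the conditional law of $\TL_n$ given the trajectory is correct, but you have overlooked that it already yields \emph{exact} uniformity, with no concentration needed. Summing your expression over trajectories gives, for $1\le j\le s-1$,
\[
\Prob\bigl(\TL_n(v)=j,\ \SL_n(v)=s\bigr)\;=\;\frac{1}{\ff{n}{s}}\sum_{(u_1,\dots,u_{s-1})\in J_{n,s-1}(v)}\Bigl(\prod_{i=1}^{s-1} d_{u_i}\Bigr)(d_{u_j}-1),
\]
and the transposition $u_j\leftrightarrow u_{j'}$ is a bijection of the index set that fixes the product $\prod_i d_{u_i}$ while carrying the factor $(d_{u_j}-1)$ to $(d_{u_{j'}}-1)$; hence the left side is independent of $j\in[s-1]$. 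This is precisely the paper's Lemma~\ref{lem_unif_tail}, phrased there as an explicit bijection on functional graphs rather than a change of variables in the sum. With exact uniformity in hand, the joint limit reduces to \cref{thm_sixlength} together with $\Prob(\TL_n(v_n)=0)\to 0$; only the latter requires any analysis, and only there does the paper invoke the large/small coalescence split (via Lemma~\ref{lem_tn_0_bound} and Corollary~\ref{lem_tn_0} under \eqref{a+}, and via the $w$-reduction otherwise).

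The concentration route you propose instead is both unnecessary and, as written, does not close. An Azuma bound with increments bounded by $\Delta$ controls $|W_k-\E W_k|$ only to order $\sqrt{k}\,\Delta$, so you would need $\Delta=o(\sqrt{k}\,\sigma^2)$; at the relevant scale $k\asymp\sqrt{n/\sigma^2}$ this reads $\Delta=o\bigl((n\sigma^6)^{1/4}\bigr)$, which \eqref{b2} does \emph{not} imply (take $\sigma^2\asymp 1$ and $\Delta=n^{2/5}$). A second-moment argument using the $m_3$ bound of \cref{lem_m3_moments} could repair this under \eqref{a+}, but you would still have to argue that conditioning on $\{\SL_n=s\}$ does not distort the joint law of $(w_1,\dots,w_{s-1})$ --- all of which is avoidable once you notice the symmetry above.
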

\begin{remark} A combination of \cref{thm_moments} and \cref{thm_taillength} yields
\begin{align*}
\E[\TL_n(v_n)]\sim \sqrt{ \frac{\pi n } {8\sigma^2(\ds n)}}\quad\text{and}\quad \E[\CL_n(v_n)]\sim\sqrt{ \frac{\pi n } {8\sigma^2(\ds n)}}.
\end{align*}
These results support 
 a conjecture by Brent and Pollard \cite[Section 3]{BrPo81} on the typical tail- and cycle-length of polynomials mod $p$.
\end{remark}

\section{Proofs for sequences with large coalescence}\label{sec_proofs}
 
We first prove all Theorems under the additional assumption
\begin{align}
\sigma^2(\ds n)=\omega\left(\frac {\log n} {n^{1/3}}\right).\label{a+}\tag{A+}
\end{align}
Cases with $\sigma^2(\ds n)=\bo\left(\log n / n^{1/3}\right)$ will be discussed in \cref{sec_small_sig}.
 
Throughout this section we omit the dependence on $\ds n$ in the notation. In particular
\begin{align*}
\Delta:=\Delta(\ds n),\quad m_j:=\md j {\ds n},\quad \sigma^2:=\sigma^2(\ds n).
\end{align*}
Moreover, we also omit the dependence on $n$ in the notation of the degrees, that is 
\begin{align*}
(d_1,\ldots,d_n):=(d_{n,1},\ldots,d_{n,n}).
\end{align*}
Unless stated otherwise, $n$ is a positive integer and asymptotic results are as \mbox{$n\rightarrow\infty$}. Condition \eqref{a0} is the only condition assumed throughout the section. All other assumptions are stated in the lemmas separately.
\subsection{Limit theorem for the six-length}  This section contains the proof of Theorem~\ref{thm_sixlength} for degree sequences that additionally satisfy \eqref{a+}, that is we prove the following statement:
\begin{proposition}\label{thm_sl_a+}
Assume \eqref{a0}, \eqref{a1}, \eqref{a2} and \eqref{a+}.
 Then 
\begin{align*}
\lim_{n\rightarrow\infty} \Prob\left(\SL_n(v_n)> x\sqrt{n /\sigma^2(\ds n)}\right)=\mathrm{e}^{-x^2/ 2},\quad x>0.
\end{align*}
\end{proposition}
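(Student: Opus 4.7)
The plan is to expose the orbit $v_0:=v_n$, $v_j:=F(v_{j-1})$ step by step. I would use the half-edge (configuration) representation of $\mathfrak{F}(\ds n)$: identify a uniform $F$ with a uniformly random perfect matching between $n$ out-half-edges (one per vertex) and $n$ in-half-edges ($d_i$ at vertex $i$). Revealing the orbit then amounts to drawing in-half-edges one at a time without replacement, each labelled by its host vertex. Setting $S_j:=\sum_{\ell=0}^{j-1}d_{v_\ell}$, the $S_i-(i-1)$ still-unmatched in-half-edges at $\{v_0,\ldots,v_{i-1}\}$ among $n-(i-1)$ unmatched in-half-edges give the conditional closure probability
\begin{align*}
p_i\ :=\ \frac{S_i-(i-1)}{n-(i-1)},
\end{align*}
so that (with $p_i:=1$ once the orbit has closed)
\begin{align*}
\Prob(\SL_n(v_n)>k)\ =\ \E\!\left[\prod_{i=1}^{k}(1-p_i)\right].
\end{align*}
I take $k=k_n(x):=\lceil x\sqrt{n/\sigma^2}\rceil$ and aim to show $\prod_{i\le k}(1-p_i)\to e^{-x^2/2}$ in probability.

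The next step is to show $\sum_{i\le k}p_i\to x^2/2$ in probability. Conditioning on the history gives
\begin{align*}
\E[d_{v_j}\mid v_0,\ldots,v_{j-1}]\ =\ \frac{m_2 - (S_j-d_{v_0})}{n-(j-1)}\ =\ 1+\sigma^2 + \bo\!\left(\frac{j\sigma^2}{n}\right),
\end{align*}
so iterating yields $\E[S_i] = d_{v_0}+(i-1)(1+\sigma^2)+\bo(k^2\sigma^2/n)$, hence $\E[p_i] = (i-1)\sigma^2/n + o(1/k)$ uniformly in $i\le k$ and $\E[\sum_{i\le k} p_i]\sim k^2\sigma^2/(2n)\to x^2/2$. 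For the variance, Abel summation gives $\sum_{i\le k}p_i = n^{-1}\sum_{\ell<k}(k-\ell)d_{v_\ell} + (\text{lower order})$, and a Doob-type martingale bound using $\E[d_{v_j}^2\mid\text{history}]\le \Delta m_2/(n-(j-1)) = (1+o(1))\Delta(1+\sigma^2)$ yields
\begin{align*}
\V\!\left(\sum_{i=1}^{k}p_i\right)\ =\ \bo\!\left(\frac{k^3\Delta\sigma^2}{n^2}\right)\ =\ o(1),
\end{align*}
using $k=\Theta(\sqrt{n/\sigma^2})$ and $\Delta = o(\sqrt{n\sigma^2})$ from \eqref{a2}; hypothesis \eqref{a+} is then used to absorb logarithmic slack in the secondary error terms coming from the passage between conditional and unconditional expectations.

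Finally, $\max_i p_i\le k\Delta/n = o(1)$ by \eqref{a2}, so $\log(1-p_i) = -p_i + \bo(p_i^2)$ and $\sum p_i^2 \le(\max_i p_i)\sum p_i = o_{\Prob}(1)$. Thus $\prod_{i\le k}(1-p_i) = \exp\bigl(-\sum_{i\le k} p_i + o_{\Prob}(1)\bigr)\to e^{-x^2/2}$ in probability. Since the product lies in $[0,1]$, bounded convergence gives $\Prob(\SL_n(v_n)>k)\to e^{-x^2/2}$, as required. The main obstacle is the concentration of $\sum p_i$: the increments $d_{v_j}$ are sampled by a size-biased, without-replacement scheme with nontrivial dependencies, and controlling their partial-sum variance sharply is exactly where the bound $\Delta = o(\sqrt{n\sigma^2})$ from \eqref{a2} (together with \eqref{a+} for the lower-order terms) becomes essential.
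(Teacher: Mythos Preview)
Your exploration/exposure approach is genuinely different from the paper's. The paper instead starts from the exact identity
\[
\Prob(\SL_n(v)>k)=\frac{1}{\ff n k}\sum_{(i_1,\dots,i_k)\in J_{n,k}(v)}\prod_{j=1}^k d_{i_j},
\]
rewrites its symmetrised version $g_n(k)$ as a ratio of point probabilities of a Bernoulli sum and a binomial, and then uses Chen--Stein Poisson approximation to extract the factor $\exp(-k^2\sigma^2/(2n))$. Your route---concentrate $\sum_{i\le k}p_i$ by a second-moment/martingale argument on the size-biased degrees $d_{v_j}$---is more probabilistic and in principle viable, and the variance bound $\bo(k^3\Delta\sigma^2/n^2)=o(1)$ under \eqref{a2} is the right order.

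However, the identity $\Prob(\SL_n(v_n)>k)=\E\bigl[\prod_{i\le k}(1-p_i)\bigr]$ on which the whole argument rests is \emph{false}. The $p_i$ are random (they depend on the realised path), and one cannot replace the closure indicators by their conditional expectations inside the product. A quick check: for $n=3$, $d_1=d_2=d_3=1$, $v_0=1$, one has $p_1=1/3$ and, on $\{\SL\ge 2\}$, $p_2=1/2$; with your convention $p_2=1$ on $\{\SL=1\}$ this gives $\E[(1-p_1)(1-p_2)]=\tfrac{2}{3}\cdot\tfrac{1}{3}=\tfrac{2}{9}$, whereas $\Prob(\SL>2)=\tfrac{1}{3}$. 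What \emph{is} true is that $M_k:=\Ind_{\{\SL>k\}}/\prod_{i\le k}(1-p_i)$ is a mean-one martingale, so $\E[M_k]=1$; deducing $\Prob(\SL>k)\to e^{-x^2/2}$ from your concentration of $\prod(1-p_i)$ then requires controlling the upper tail of $M_k$ on $\{\SL>k\}$, which you have not addressed. There is a related inconsistency: with your convention the product vanishes on $\{\SL<k\}$, so if it truly converged to $e^{-x^2/2}>0$ in probability you would force $\Prob(\SL<k)\to 0$ for every fixed $x>0$, which is absurd. What your moment computations for $d_{v_j}$ actually establish is concentration \emph{conditional on} $\{\SL\ge k\}$ (they presuppose a non-self-intersecting path), and bridging from that conditional statement to the unconditional limit for $\Prob(\SL>k)$ is precisely the missing step.
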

The proof of  is based on the following explicit formula for the probabilities. In fact, the formula below remains valid even without making any assumptions on the degree sequence other than \eqref{a0}.
\begin{lemma}\label{lem_dist_SL}
For every $n\geq 2$ and  $v\in [n]$
\begin{align*}
\Prob(\SL_n(v) > k) = \frac {1} {\ff n k} \sum_{(i_1,\ldots, i_k)\in J_{n,k}(v)} \prod_{j=1}^k d_{i_j},\quad 1\leq k\leq n-1,
\end{align*}
with $\ff n k=\prod_{j=0}^{k-1} (n-j)$ and $J_{n,k}(v)=\{ (j_1,\ldots,j_k)\in ([n] \setminus \{v\})^k : j_\ell \neq j_m \text{ for } \ell \neq m\}$.
\end{lemma}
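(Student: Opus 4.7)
The plan is to compute the probability directly by conditioning on the trajectory $(f(v), f^2(v), \ldots, f^k(v))$. The event $\{\SL_n(v) > k\}$ is exactly the event that the first $k+1$ iterates $v, f(v), \ldots, f^k(v)$ are pairwise distinct, and this is equivalent to saying the ordered tuple $(f(v), f^2(v), \ldots, f^k(v))$ lies in $J_{n,k}(v)$.

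The key observation is that elements of $\mathfrak{F}(\ds n)$ are in bijection with ordered set partitions of $[n]$ into labeled blocks of sizes $d_1, \ldots, d_n$ (block $i$ is $f^{-1}(\{i\})$), so
\begin{align*}
|\mathfrak{F}(\ds n)| = \binom{n}{d_1,\ldots,d_n} = \frac{n!}{d_1!\cdots d_n!}.
\end{align*}
For a fixed tuple $(i_1,\ldots,i_k) \in J_{n,k}(v)$, a function $f$ realizes this trajectory iff $f(v)=i_1$ and $f(i_j)=i_{j+1}$ for $1\leq j\leq k-1$. I would count such $f$ by noting that the remaining $n-k$ vertices (those outside $\{v,i_1,\ldots,i_{k-1}\}$) still need to be mapped, and the residual in-degree of vertex $i_j$ is $d_{i_j}-1$ (for $1\leq j\leq k$) while all other residual in-degrees are unchanged; the sum of residual in-degrees equals $n-k$, matching the remaining out-edges. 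The number of extensions is therefore the multinomial
\begin{align*}
\frac{(n-k)!}{d_v!\,\prod_{j=1}^{k}(d_{i_j}-1)! \,\prod_{u\notin\{v,i_1,\ldots,i_k\}} d_u!},
\end{align*}
with the convention that the term vanishes whenever some $d_{i_j}=0$ (which is automatic since the formula gives $(-1)!$ in the denominator, or equivalently because the factor $d_{i_j}$ below is zero).

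Dividing by $|\mathfrak{F}(\ds n)|$ produces a massive cancellation: the denominators $d_v!$ and $\prod_{u\notin\{v,i_1,\ldots,i_k\}} d_u!$ cancel against corresponding factors in $d_1!\cdots d_n!$, and each $d_{i_j}!/(d_{i_j}-1)! = d_{i_j}$ remains. Using $(n-k)!/n! = 1/\ff{n}{k}$ and summing over $(i_1,\ldots,i_k)\in J_{n,k}(v)$ then yields the claimed identity. I do not expect any real obstacle here: the entire argument is a straightforward double-counting, and the only point demanding attention is verifying that the residual in-degree bookkeeping is consistent (in particular, that $v$ itself is not counted among the $i_j$'s, so its in-degree $d_v$ is untouched, and that forbidding $d_{i_j}=0$ is handled correctly by the vanishing of the multinomial coefficient).
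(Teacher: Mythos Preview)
Your proposal is correct and follows essentially the same approach as the paper's proof: both decompose the event $\{\SL_n(v)>k\}$ according to the trajectory $(F(v),\ldots,F^{(k)}(v))\in J_{n,k}(v)$, count the functions in $\mathfrak{F}(\ds n)$ realizing a fixed trajectory via a multinomial coefficient (your expression with $d_v!$ separated out is identical to the paper's $\prod_{\ell\notin\{i_1,\ldots,i_k\}} d_\ell!\prod_j (d_{i_j}-1)!$, since $v\notin\{i_1,\ldots,i_k\}$), and then divide by $|\mathfrak{F}(\ds n)|=n!/\prod_\ell d_\ell!$ to obtain the stated formula.
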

\begin{proof}
Recall that $F$ denotes a function drawn uniformly at random from the set $\mathfrak{F}(\ds n)$ defined in \eqref{func_set}.
Note that $J_{n,k}(v)$ corresponds to the set of all possible non-self-intersecting $k$-paths starting at $v$. Thus, we have
\begin{align*}
\Prob(\SL_n(v) > k)=\sum_{J\in J_{n,k}(v)}
\Prob\left(\left(F(v),\ldots,F^{(k)}(v)\right)=J\right).
\end{align*}
The probability on the right hand side  can  be derived  by counting the functions in $\mathfrak{F}(\ds n)$ that lead to the path $J$. Since $J$ determines the images of exactly $k$ elements to be $i_1,\ldots,i_k$, there are 
\begin{align*}
\frac{(n-k)!}{\prod_{\ell\notin\{i_1,\ldots,i_k\}} d_\ell! \prod_{j=1}^k (d_{i_j}-1)!}
\end{align*}
possible ways to choose the remaining images. The assertion follows after dividing by the total number $n!/ \prod_{\ell=1}^n d_\ell!$ of elements in $\mathfrak{F}(\ds n)$.
\end{proof}
\begin{lemma}\label{lem_g}
Let $g_n:[n]\rightarrow [0,\infty)$ be defined as 
\begin{align*}
g_n(k)=\frac {k!} {\ff n k} \sum_{i_1<\ldots<i_k} \prod_{j=1}^k d_{i_j}
\end{align*}
where the summation is taken over all $(i_1,\ldots,i_k)\in[n]^k$ with $i_1<\ldots<i_k$.
Then
\begin{align*}
\Prob(\SL_n(v)>k) = g_n(k) - \frac{ k d_v}{n-k+1} \Prob(\SL_n(v) >k-1),\quad k\geq 2,\,v\in [n].
\end{align*}
\end{lemma}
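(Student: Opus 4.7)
The plan is to prove the identity by directly manipulating the explicit formula of \cref{lem_dist_SL}, exploiting the close similarity between the sum defining $g_n(k)$ and the sum defining $\Prob(\SL_n(v)>k)$; the only difference is whether the index $v$ is permitted to appear among $i_1,\ldots,i_k$.

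First I would rewrite $g_n(k)$ in terms of ordered tuples. Let $\tilde J_{n,k}:=\{(j_1,\ldots,j_k)\in[n]^k:j_\ell\neq j_m\text{ for }\ell\neq m\}$ be the set of ordered tuples of $k$ distinct elements of $[n]$. Since the product $\prod_j d_{i_j}$ is symmetric in the indices, each unordered $k$-subset is counted $k!$ times in the sum over $\tilde J_{n,k}$, so
\begin{align*}
g_n(k)=\frac{1}{\ff n k}\sum_{(i_1,\ldots,i_k)\in\tilde J_{n,k}}\prod_{j=1}^k d_{i_j}.
\end{align*}

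Next I would partition $\tilde J_{n,k}$ according to whether $v$ appears. The tuples that avoid $v$ form exactly $J_{n,k}(v)$, and by \cref{lem_dist_SL} contribute $\Prob(\SL_n(v)>k)$ after dividing by $\ff n k$. For the remaining tuples, I would sum over the position $p\in[k]$ at which $v$ occurs; in that case $i_p=v$ contributes a factor $d_v$, while the other $k-1$ coordinates form an arbitrary element of $J_{n,k-1}(v)$. Collecting the $k$ choices of $p$ and using \cref{lem_dist_SL} once more yields
\begin{align*}
\frac{1}{\ff n k}\sum_{\substack{(i_1,\ldots,i_k)\in\tilde J_{n,k}\\ v\in\{i_1,\ldots,i_k\}}}\prod_{j=1}^k d_{i_j}
=\frac{k\,d_v\,\ff{n}{k-1}}{\ff n k}\,\Prob(\SL_n(v)>k-1)
=\frac{k\,d_v}{n-k+1}\,\Prob(\SL_n(v)>k-1),
\end{align*}
where I have used $\ff n k=(n-k+1)\,\ff{n}{k-1}$.

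Combining the two contributions gives
\begin{align*}
g_n(k)=\Prob(\SL_n(v)>k)+\frac{k\,d_v}{n-k+1}\,\Prob(\SL_n(v)>k-1),
\end{align*}
which rearranges to the claimed formula. There is no real obstacle: the argument is a bijective bookkeeping step, and the only point requiring care is the combinatorial factor $k$ arising from the choice of which coordinate equals $v$, together with the quotient $\ff{n}{k-1}/\ff n k=1/(n-k+1)$.
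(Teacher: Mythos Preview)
Your argument is correct and follows essentially the same route as the paper: both rewrite $g_n(k)$ as $\ff n k^{-1}\sum_{\tilde J_{n,k}}\prod d_{i_j}$, split off the tuples containing $v$ by summing over the position of $v$, and then invoke \cref{lem_dist_SL} together with $\ff n k=(n-k+1)\ff n{k-1}$.
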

\begin{proof}
Let $\widetilde{J}_k=\{ (j_1,\ldots,j_k)\in [n]^k : j_\ell \neq j_m \text{ for } \ell \neq m\}$. Lemma \ref{lem_dist_SL} implies
\begin{align}\label{pf_lem_g}
\Prob(\SL_n(v)>k)=\frac 1 {\ff n k} \sum_{(i_1,\ldots, i_k)\in \widetilde{J}_k} \prod_{j=1}^k d_{i_j}
-\frac 1 {\ff n k} \sum_{(i_1,\ldots, i_k)\in \widetilde{J}_k\setminus J_{n,k}(v)} \prod_{j=1}^k d_{i_j}.
\end{align}
The first term equals $g_n(k)$ by matching vectors with equal order statistics. 
 For the second sum note that 
\begin{align*} 
 \widetilde{J}_k\setminus J_{n,k}(v)=\bigcup_{j=1}^k\left\{(i_1,\ldots,i_k)\in [n]^k : i_j=v, (i_1,\ldots,i_{j-1},i_{j+1},\ldots,i_k)\in J_{n,k-1}(v)\right\}.
\end{align*} 
Hence,
\begin{align*}
\sum_{(i_1,\ldots, i_k)\in \widetilde{J}_k\setminus J_{n,k}(v)} \prod_{j=1}^k d_{i_j}
=k d_v \sum_{(i_1,\ldots, i_{k-1})\in J_{n,k-1}(v)} \prod_{j=1}^k d_{i_j}
\end{align*}
and the assertion follows from Lemma \ref{lem_dist_SL}.
\end{proof}
Note that the previous Lemma in particular yields the following bounds:
\begin{align}\label{bounds_sl_gn}
\frac{n-k}{n-k+(k+1)d_v} g_n(k+1) \leq \Prob(\SL_n(v)>k) \leq \frac{n-k+1}{n-k+1+kd_v} g_n(k). 
\end{align}
Thus we can focus on the asymptotic behaviour of $g_n(k)$ for $k=\Theta(\sqrt{n/\sigma^2})$ instead.  However, since we need some large deviation bounds in later proofs, we formulate the following lemmas so as to cover a wider range for $k$ than necessary for \cref{thm_sl_a+}. 

The first step is to transform the sum in $g_n(k)$ into a probability that is covered by  Poission approximation. To this end let
\begin{align*}
\alpha=\alpha(n,k)=\frac k n.
\end{align*}
Then $g_n(k)$ can be rewritten as follows:
\begin{align}\label{eq_gn_1}
g_n(k)=\frac{k!}{\langle n \rangle_k \alpha^k}  \prod_{j=1}^n (\alpha d_j +1) \sum_{i_1<\ldots<i_k} \prod_{j=1}^k \frac{\alpha d_{i_j}}{\alpha d_{i_j}+1} \prod_{\ell\in [n]\setminus\{i_1,\ldots,i_k\}} \frac 1 {\alpha d_\ell +1}.
\end{align}
Now let $B_n$ be binomially $B(n,\alpha)$ distributed. Moreover, let $X_1,\ldots,X_n$ be
independent, Bernoulli distributed random variables with $\Prob(X_i=1)={\alpha d_{i_j}}/(\alpha d_{i_j}+1)$ and let $S_n=X_1+\cdots+X_n$. Then \eqref{eq_gn_1} yields
\begin{align}\label{eq_gn_2}
g_n(k)=(1-\alpha)^{n-k} \prod_{j=1}^n (\alpha d_j +1) \frac{\Prob(S_n=k)}{\Prob(B_n=k)}.
\end{align}
\begin{lemma}\label{lem_asym_lambda}
Let $\lambda=\E[S_n]$, that is
\begin{align*}
\lambda=\sum_{j=1}^n \frac{\alpha d_j}{\alpha d_j+1}
\end{align*} 
with $\alpha=k/n$. Moreover, let $x\wedge y=\min\{x,y\}$. Then 
\begin{align*}
\lambda= k - \frac{k^2 m_2}{n^2} +\bo\left(\frac{k^3 m_3}{n^3}\wedge \frac{k^2 m_2}{n^2}\right).
\end{align*}
 In particular,
$\lambda-k=\bo\left(k^2 m_2/n^2\right)$.
\end{lemma}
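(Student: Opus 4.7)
The key idea is to expand $\frac{\alpha d_j}{\alpha d_j+1}$ using an exact remainder identity rather than a Taylor series, since the term $\alpha d_j$ need not be small (it could in principle approach or exceed $1$ when $d_j$ is close to $\Delta$). Specifically, I would use the algebraic identity
\begin{align*}
\frac{x}{1+x} = x - x^2 + \frac{x^3}{1+x},\qquad x\geq 0,
\end{align*}
which holds for all $x\geq 0$, applied with $x=\alpha d_j$.

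Substituting and summing over $j\in[n]$, I obtain
\begin{align*}
\lambda = \alpha\sum_{j=1}^n d_j \;-\; \alpha^2\sum_{j=1}^n d_j^2 \;+\; \sum_{j=1}^n \frac{(\alpha d_j)^3}{1+\alpha d_j}.
\end{align*}
Using assumption \eqref{a0}, namely $\sum_j d_j = n = m_1$, the first term equals $\alpha n = k$; the second equals $\alpha^2 m_2 = k^2 m_2/n^2$. This already gives the stated leading terms, and it only remains to bound the remainder uniformly in two different ways so as to obtain the minimum.

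For the error, I would estimate the final sum in two ways. First, bounding $1+\alpha d_j\geq 1$ gives
\begin{align*}
\sum_{j=1}^n \frac{(\alpha d_j)^3}{1+\alpha d_j} \leq \alpha^3 \sum_{j=1}^n d_j^3 = \frac{k^3 m_3}{n^3}.
\end{align*}
Second, bounding $\frac{\alpha d_j}{1+\alpha d_j}\leq 1$ gives
\begin{align*}
\sum_{j=1}^n \frac{(\alpha d_j)^3}{1+\alpha d_j} = \sum_{j=1}^n (\alpha d_j)^2 \cdot \frac{\alpha d_j}{1+\alpha d_j} \leq \alpha^2 \sum_{j=1}^n d_j^2 = \frac{k^2 m_2}{n^2}.
\end{align*}
Combining these two bounds yields the desired $\bo\bigl(\tfrac{k^3 m_3}{n^3}\wedge \tfrac{k^2 m_2}{n^2}\bigr)$ error term. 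The ``in particular'' statement then follows immediately: both the $-\alpha^2 m_2$ main correction and the error are of order at most $k^2 m_2/n^2$, so $\lambda - k = \bo(k^2 m_2/n^2)$.

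There is no real obstacle in this proof; it is a direct calculation. The one subtle point worth flagging is that a naive Taylor expansion of $\frac{x}{1+x}$ in powers of $x$ is not valid when $\alpha d_j\geq 1$ (which can happen for the largest degrees under the range of $\sigma^2$ considered), so the correct remainder-form identity must be used to keep the bounds uniform over all $j$. This is what allows the bound to be expressed cleanly in terms of $m_2$ and $m_3$ rather than in terms of $\Delta$.
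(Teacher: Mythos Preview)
Your proof is correct and follows essentially the same approach as the paper: the paper uses the identical identity $\frac{x}{x+1}=x-x^2+\frac{x^3}{x+1}=x-x^2+\bo(x^3\wedge x^2)$ for $x\geq 0$ and then sums over $j$. Your version simply spells out the two bounds $\frac{x^3}{x+1}\leq x^3$ and $\frac{x^3}{x+1}\leq x^2$ explicitly, which is exactly what the paper's $\bo(x^3\wedge x^2)$ encodes.
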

\begin{proof}Note that for $x\geq 0$
\begin{align*}
\frac x {x+1}=x-x^2+\frac{x^3}{x+1}=x-x^2+\bo(x^3\wedge x^2).
\end{align*}
Using this bound in the definition of $\lambda$ yields the assertion.
\end{proof} 
Next we apply Chen-Stein  Poisson approximation to obtain the following result:
\begin{lemma}\label{lem_g_1} Let $\lambda$ be as in the previous lemma.
Then, for  $k=o\left(\left(n^2 / m_2\right)^{2/3}\right)$,  
\begin{align*}
g_n(k)=(1-\alpha)^{n-k} \left(\prod_{j=1}^n (\alpha d_j +1)\right) \mathrm{e}^{k-\lambda} \left(\frac{\lambda}{k}\right)^k \left(1+ \bo\left(\ \frac{k^{3/2} m_2}{n^2} \right)\right).
\end{align*}
\end{lemma}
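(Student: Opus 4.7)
The plan is to start from the identity \eqref{eq_gn_2}, which already factors $g_n(k)$ as $(1-\alpha)^{n-k}\prod_j(\alpha d_j+1)\cdot \Prob(S_n=k)/\Prob(B_n=k)$, and to observe that $\mathrm{e}^{k-\lambda}(\lambda/k)^k$ is exactly the ratio of Poisson point masses $\Prob(\mathrm{Poi}(\lambda)=k)/\Prob(\mathrm{Poi}(k)=k)$. So the lemma reduces to the two pointwise Poisson approximations
\begin{equation*}
\Prob(S_n=k)=\Prob(\mathrm{Poi}(\lambda)=k)\bigl(1+\bo(k^{3/2}m_2/n^2)\bigr),\qquad \Prob(B_n=k)=\Prob(\mathrm{Poi}(k)=k)\bigl(1+\bo(k^{3/2}m_2/n^2)\bigr),
\end{equation*}
after which one simply divides and the Chen-Stein error in the ratio collapses to $1+\bo(k^{3/2}m_2/n^2)$.

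For both estimates I would apply the sharp Chen-Stein / Barbour-Hall bound for sums of independent Bernoulli variables: if $Z=Z_1+\cdots+Z_n$ is such a sum with parameters $q_i$ and mean $\mu$, then $d_{TV}(Z,\mathrm{Poi}(\mu))\leq(1-\mathrm{e}^{-\mu})\mu^{-1}\sum_i q_i^2\leq\mu^{-1}\sum_i q_i^2$. For $S_n$ the parameters satisfy $p_i=\alpha d_i/(\alpha d_i+1)\leq\alpha d_i$, giving $\sum_i p_i^2\leq\alpha^2 m_2=k^2m_2/n^2$; combined with $\lambda\sim k$ (which follows from Lemma~\ref{lem_asym_lambda} because $k=o((n^2/m_2)^{2/3})$ implies $k=o(n^2/m_2)$), this yields $|\Prob(S_n=k)-\Prob(\mathrm{Poi}(\lambda)=k)|=\bo(km_2/n^2)$. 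Applied to $B_n$, whose Bernoulli parameters are all $\alpha$ and whose mean is $k$, the same bound gives the absolute error $n\alpha^2/k=k/n$, which is itself $\bo(km_2/n^2)$ because $m_2\geq n$.

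To convert these absolute errors into the required relative ones I would lower bound the two Poisson probabilities. Writing $\mathrm{e}^{-\lambda}\lambda^k/k!=\mathrm{e}^{k-\lambda}(\lambda/k)^k\cdot\mathrm{e}^{-k}k^k/k!$ and Taylor-expanding, the prefactor equals $\exp\bigl(-(\lambda-k)^2/(2k)+O(|\lambda-k|^3/k^2)\bigr)$; with $|\lambda-k|=\bo(k^2m_2/n^2)$ from Lemma~\ref{lem_asym_lambda} the exponent is $\bo(k^3m_2^2/n^4)=o(1)$ precisely under the hypothesis $k=o((n^2/m_2)^{2/3})$. Stirling then gives $\Prob(\mathrm{Poi}(\lambda)=k)\sim\Prob(\mathrm{Poi}(k)=k)\sim(2\pi k)^{-1/2}$, so dividing the absolute errors above by these values inflates them by a factor $\sqrt{k}$ and produces the claimed relative error $\bo(k^{3/2}m_2/n^2)$ in both cases. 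The main obstacle is precisely this absolute-to-relative conversion: Chen-Stein alone cannot deliver a multiplicative bound, and the hypothesis $k=o((n^2/m_2)^{2/3})$ must be matched simultaneously to the size $\Theta(k^{-1/2})$ of the Poisson mass at $k$ and to the deviation $\lambda-k$, which is what the careful bookkeeping above verifies.
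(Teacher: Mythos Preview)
Your proposal is correct and follows essentially the same approach as the paper: start from \eqref{eq_gn_2}, apply the Chen--Stein/Barbour--Hall total variation bound to both $S_n$ and $B_n$ to get absolute errors $\bo(km_2/n^2)$ and $\bo(k/n)$, then use Stirling and the estimate $\mathrm{e}^{k-\lambda}(\lambda/k)^k=1+o(1)$ (which the paper isolates as Lemma~\ref{lem_xxx} while you derive it inline via the same Taylor expansion) to convert these into relative errors of size $\bo(k^{3/2}m_2/n^2)$. Your observation that $m_2\geq n$ absorbs the $B_n$ error $k^{3/2}/n$ into the same form is the only cosmetic difference from the paper, which keeps that term separate before combining.
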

\begin{proof} A standard Chen-Stein bound for Poisson approximation, such as in Barbour, Holst and Janson~\cite[Equation (1.23)]{BHJ}, implies
\begin{align*}
\left|\Prob(S_n=k)-\mathrm{e}^{-\lambda}\frac {\lambda^k}{k!}\right|&\leq \frac 1 \lambda \sum_{j=1}^n \left(\frac{\alpha d_j}{\alpha d_j+1}\right)^2\leq \frac{\alpha^2 m_2}{\lambda}=\frac{k^2 m_2}{\lambda n^2} ,\\
\left|\Prob(B_n=k)-\mathrm{e}^{-k}\frac {k^k}{k!}\right|&\leq \frac n k \alpha^2 = \frac k n.
\end{align*}
It only remains to transform these into relative error bounds. Note that Stirling's approximation yields
\begin{align*}
\mathrm{e}^{-k}\frac {k^k}{k!}=\Theta\left(\frac 1 {\sqrt k }\right),\qquad \mathrm{e}^{-\lambda}\frac {\lambda^k}{k!}=\Theta\left(\frac {\mathrm{e}^{k-\lambda}}{\sqrt k } \left(\frac \lambda k \right)^k\right).
\end{align*}
As formally shown in Lemma \ref{lem_xxx} below,
 $\mathrm{e}^{k-\lambda} (\lambda/k)^k=1+o(1)$.
Hence, since Lemma~\ref{lem_asym_lambda} implies $\lambda\sim k$,
\begin{align*}
\Prob(S_n=k)&=\mathrm{e}^{-\lambda}\frac{\lambda^k}{k!} \left(1+\bo\left(\frac{k^{3/2} m_2}{n^2} \right)\right),\\
\Prob(B_n=k)&=\mathrm{e}^{-k}\frac {k^k}{k!} \left(1+\bo\left(\frac{k^{3/2}} n\right)\right).
\end{align*}
Therefore \eqref{eq_gn_2} implies the assertion.
\end{proof}

\begin{lemma}\label{lem_xxx} Let  $k=o\left(\left(n^2/m_2\right)^{2/3}\right)$. Then
$\mathrm{e}^{k-\lambda} \left( \lambda /k \right)^k=1+\bo\left(k^3m_2^2/n^4\right)$.
\end{lemma}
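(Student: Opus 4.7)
The plan is to take logarithms, Taylor-expand, and show that all terms up to the claimed error vanish because of the first-order expansion of $\lambda$ in Lemma~\ref{lem_asym_lambda}.

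Write $\delta := k - \lambda$. By Lemma~\ref{lem_asym_lambda},
\[
\delta = \frac{k^2 m_2}{n^2} + \bo\!\left(\frac{k^3 m_3}{n^3}\wedge \frac{k^2 m_2}{n^2}\right),
\]
so in particular $\delta = \bo(k^2 m_2/n^2)$. Since $k = o\!\left((n^2/m_2)^{2/3}\right)$, we have
\[
\frac{\delta}{k} = \bo\!\left(\frac{k m_2}{n^2}\right) = o\!\left(\left(\frac{m_2}{n^2}\right)^{1/3}\right) = o(1),
\]
the last equality holding because $m_2 \le n\cdot n = n^2$ (and in fact we only need $m_2/n^2$ to be bounded). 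This ensures $\lambda/k = 1 - \delta/k$ lies in a small neighbourhood of $1$, so the Taylor expansion of $\log(1-x)$ applies safely.

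Next, compute
\[
\log\!\left(\mathrm{e}^{k-\lambda}(\lambda/k)^k\right) = \delta + k\log\!\left(1-\tfrac{\delta}{k}\right)
= \delta - k\sum_{j\ge 1}\frac{1}{j}\left(\frac{\delta}{k}\right)^{j}
= -\frac{\delta^2}{2k} + \bo\!\left(\frac{\delta^3}{k^2}\right).
\]
The linear terms cancel exactly, which is the whole reason the result is sharper than what a crude bound would give. Substituting $\delta = \bo(k^2 m_2/n^2)$ yields
\[
-\frac{\delta^2}{2k} + \bo\!\left(\frac{\delta^3}{k^2}\right) = \bo\!\left(\frac{k^3 m_2^2}{n^4}\right) + \bo\!\left(\frac{k^4 m_2^3}{n^6}\right) = \bo\!\left(\frac{k^3 m_2^2}{n^4}\right),
\]
where the last simplification uses $km_2/n^2 = o(1)$, which absorbs the cubic error into the quadratic one.

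Finally, the hypothesis $k = o((n^2/m_2)^{2/3})$ gives $k^3 m_2^2 / n^4 = o(1)$, so the log expression above is $o(1)$, and exponentiating via $\mathrm{e}^{x} = 1 + \bo(x)$ for $x = o(1)$ delivers the claim. The only real subtlety is the bookkeeping around the cancellation of the linear term, together with verifying $\delta/k = o(1)$; everything else is standard Taylor estimation. I would expect no genuine obstacle, only careful tracking of the dominant error.
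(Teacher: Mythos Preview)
Your proof is correct and follows essentially the same approach as the paper: set $\delta=k-\lambda$, use Lemma~\ref{lem_asym_lambda} to get $\delta=\bo(k^2 m_2/n^2)$, verify $\delta/k=o(1)$ from the hypothesis on $k$, Taylor-expand $\log(1-\delta/k)$ so that the linear term cancels against $\delta$, and bound the remaining $\bo(\delta^2/k)$ term before exponentiating. The paper's version is terser (it records only $\delta=o(\sqrt{k})$ and keeps the remainder as $\bo(\delta^2/k)$ without separating out the cubic term), but the substance is identical.
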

\begin{proof} First note that $k-\lambda=\bo(k^2m_2/n^2)$ by Lemma \ref{lem_asym_lambda}.
In particular $k-\lambda=o(\sqrt k)$ by assumption on $k$. Hence, since $\log(1-x)=-x+\bo(x^2)$ as $x\rightarrow 0$, 
\begin{align*}
\frac \lambda k =\exp\left(-\frac{k-\lambda} k +\bo\left(\frac{(k-\lambda)^2}{k^2}\right)\right). 
\end{align*}
Thus
\begin{align*}
\mathrm{e}^{k-\lambda}\left(\frac \lambda k\right)^k = \exp\left(\bo\left(\frac {(k-\lambda)^2} k \right)\right)
\end{align*}
and the assertion follows using the above bound on $k-\lambda$.
\end{proof}
\begin{lemma}\label{lem_asym_g}
Assume  $k=o\left( (n^2/m_2)^{2/3}\wedge (n^3/m_3)^{1/3}\right)$. Then
\begin{align*}
g_n(k)=\exp\left(-\frac{k^2 \sigma^2}{2n}\right)\left(1+\bo\left(\frac{k^{3/2}m_2} {n^2}+ \frac{k^3 m_3}{n^3}\right)\right).
\end{align*}
\end{lemma}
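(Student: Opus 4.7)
The plan is to take logarithms in Lemma \ref{lem_g_1} and evaluate the remaining deterministic factor
$$A_n(k):=(1-\alpha)^{n-k}\prod_{j=1}^n(1+\alpha d_j),\qquad \alpha=k/n,$$
asymptotically; once I show $\log A_n(k)=-k^2\sigma^2/(2n)+\bo(k^3m_3/n^3)$, the claim will follow by combining with the error factors from Lemma \ref{lem_g_1} and Lemma \ref{lem_xxx} and some simple bookkeeping.

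The only analytic input required is a one-sided remainder bound
$$0\leq \log(1+x)-x+\tfrac{x^2}{2}\leq \tfrac{x^3}{3},\qquad x\geq 0,$$
proved by differentiation (the derivatives of the two sides' differences are $x^2/(1+x)$ and $x^3/(1+x)$, both nonnegative, with vanishing values at $0$). The point of using this uniform bound rather than the usual Taylor expansion for small $|x|$ is that it applies to each summand $x=\alpha d_j$ no matter how large $\alpha d_j$ is, so no hidden hypothesis on $\Delta$ is needed. Summing over $j$ and using $m_1=n$ gives $\sum_j\log(1+\alpha d_j)=k-k^2m_2/(2n^2)+\bo(k^3 m_3/n^3)$. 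Combining with the routine third-order expansion $(n-k)\log(1-\alpha)=-k+k^2/(2n)+\bo(k^3/n^2)$, valid because $\alpha=o(1)$ (the hypothesis $k=o((n^2/m_2)^{2/3})$ and $m_2\geq n$ force $k=o(n^{2/3})$), the $\pm k$ pieces cancel and the quadratic-in-$k$ pieces collapse to $-k^2(m_2-n)/(2n^2)=-k^2\sigma^2/(2n)$ using $\sigma^2=m_2/n-1$.

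The error terms must then be reconciled. Exponentiation is legal because the hypothesis $k=o((n^3/m_3)^{1/3})$ forces $k^3 m_3/n^3=o(1)$, producing a multiplicative factor $1+\bo(k^3 m_3/n^3)$ for $A_n(k)$. Two further absorptions finish things off. First, a Cauchy--Schwarz inequality $m_2^2=\bigl(\sum d_j^{1/2}d_j^{3/2}\bigr)^2\leq m_1 m_3=n m_3$ turns the Lemma \ref{lem_xxx} error $k^3 m_2^2/n^4$ into at most $k^3 m_3/n^3$; second, combined with $m_2\geq n$ (from Cauchy--Schwarz $n m_2\geq m_1^2=n^2$), the same inequality gives $m_3\geq m_2^2/n\geq n$, so the stray $\bo(k^3/n^2)$ from the $\log(1-\alpha)$ expansion is also dominated by $k^3 m_3/n^3$. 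What survives are precisely the two error terms $k^{3/2}m_2/n^2$ (carried over from Lemma \ref{lem_g_1}) and $k^3m_3/n^3$ in the statement.

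The hard part is less a technical one than careful bookkeeping of three independent error sources, together with the observation that the expansion of $\log(1+\alpha d_j)$ must be justified on all of $[0,\infty)$ since $\alpha\Delta$ need not be $o(1)$ under the hypotheses. The positive-ray remainder bound above is the single technical ingredient handling that subtlety; all else is algebra and two applications of Cauchy--Schwarz.
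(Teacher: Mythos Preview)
Your proof is correct and follows essentially the same route as the paper: combine Lemmas~\ref{lem_g_1} and~\ref{lem_xxx} to reduce to the deterministic factor $(1-\alpha)^{n-k}\prod_j(1+\alpha d_j)$, then expand its logarithm to second order. Your treatment is in fact slightly more careful than the paper's in two places---you justify the expansion of $\log(1+\alpha d_j)$ via a global third-order remainder bound valid on all of $[0,\infty)$ (the paper simply writes ``by expanding $\log(1+x)$'' without addressing whether $\alpha\Delta$ is small), and you explicitly absorb the subsidiary errors $k^3m_2^2/n^4$ and $k^3/n^2$ into $k^3m_3/n^3$ via Cauchy--Schwarz, whereas the paper leaves these absorptions implicit.
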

\begin{proof}First note that  Lemmas~\ref{lem_g_1} and~\ref{lem_xxx} yield  
\begin{align}\label{pf_lem_asym_g}
g_n(k)=(1-\alpha)^{n-k}\left(\prod_{j=1}^n (\alpha d_j +1)\right) \left(1+\bo\left(\frac{k^{3/2} m_2} {n^2}\right)\right).
\end{align}
By expanding $\log(1+x)$  and using $\alpha=k/n$  we find
\begin{align*}
(1-\alpha)^{n-k}=\exp\left( -\alpha (n-k)- \frac{\alpha^2(n-k)} 2 +\bo\left(\alpha^3 n \right)\right)=\exp\left(-k + \frac{k^2} {2n} +\bo\left(\frac{k^3} {n^2}\right)\right)  
\end{align*}
and
\begin{align*}
\prod_{j=1}^n (\alpha d_j +1)=\exp\left(k-\frac{k^2 m_2}{2 n^2} +\bo\left(\frac{k^3 m_3}{n^3}\right)\right).
\end{align*}
Hence the assertion follows from~\eqref{pf_lem_asym_g} and $\sigma^2=m_2/n -1$, noting that the error term tends to 0.
\end{proof}
 As a last step before proving Proposition \ref{thm_sl_a+}, note the following:
\begin{lemma}\label{lem_m3} Assumptions \eqref{a2} and \eqref{a+} imply
$m_3(\ds n) = o\left( (n\sigma^2(\ds n))^{3/2}\right)$.
\end{lemma}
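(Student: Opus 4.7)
The approach will be to bound $m_3$ by something of the form $c\Delta n\sigma^2+(\text{smaller})$, where the factor $\Delta$ is controlled by \eqref{a2} and the factor $\sigma^2$ supplies two of the three half-powers of $n\sigma^2$ one needs. The naive estimate $m_3\le\Delta\, m_2=\Delta n(\sigma^2+1)$ is almost good enough, but after dividing by $(n\sigma^2)^{3/2}$ it leaves a remainder of order $\Delta/(\sqrt n\,\sigma^3)$ coming from the ``$+1$'', which \eqref{a2} alone cannot beat once $\sigma^2\to 0$. The remedy is to peel off the contribution of degrees in $\{0,1\}$ \emph{before} applying $\Delta$ as a pointwise bound.

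Concretely, I would start from the identity
\[
m_3 - n \;=\; \sum_{j=1}^n d_j(d_j-1)(d_j+1),
\]
which holds since $m_1=n$. Summands with $d_j\in\{0,1\}$ vanish; for $d_j\ge 2$, the bound $d_j+1\le 2d_j$ together with $d_j\le\Delta$ gives $d_j(d_j-1)(d_j+1)\le 2\Delta\, d_j(d_j-1)$. Summing and using $\sum d_j(d_j-1)=m_2-n=n\sigma^2$ yields
\[
m_3 \;\le\; n + 2\Delta\, n\sigma^2,
\]
so after dividing by $(n\sigma^2)^{3/2}$,
\[
\frac{m_3}{(n\sigma^2)^{3/2}} \;\le\; \frac{1}{\sqrt{n}\,\sigma^3} \;+\; \frac{2\Delta}{\sqrt{n\sigma^2}}.
\]

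The second term on the right is $o(1)$ directly from \eqref{a2}. The first term is precisely where \eqref{a+} enters: writing $\sqrt n\,\sigma^3 = (n^{1/3}\sigma^2)^{3/2}$ and using $n^{1/3}\sigma^2=\omega(\log n)$ gives $\sqrt n\,\sigma^3=\omega((\log n)^{3/2})$, so $1/(\sqrt n\,\sigma^3)=o(1)$ as well. The only real pitfall I see is the temptation to attack $m_3\le\Delta m_2$ head-on; one has to notice that it is the centered quantity $m_2-n=n\sigma^2$, rather than $m_2$ itself, that matches the target denominator $(n\sigma^2)^{3/2}$, and that $m_3-m_1$ (not $m_3$) has the corresponding factorization.
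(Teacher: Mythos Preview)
Your proof is correct and follows essentially the same strategy as the paper: factor a centered version of $m_3$ so that the nonnegative summands can be bounded by $\Delta$ times terms summing to $n\sigma^2$, yielding $m_3\le C\Delta n\sigma^2 + n$, and then dispatch the two pieces via \eqref{a2} and \eqref{a+} respectively. The only cosmetic difference is that the paper factors through $\sum_v d_v(d_v-1)^2 = m_3-2m_2+n$ (bounding $d_v\le\Delta$) rather than your $\sum_j d_j(d_j-1)(d_j+1)=m_3-n$ (bounding $d_j+1\le 2\Delta$), leading to the constant $\Delta+2$ in place of your $2\Delta$.
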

\begin{proof}
First note that
\begin{align}\label{pf_lem_m3}
\Delta n \sigma^2\geq \sum_{v\in[n]}d_v (d_v-1)^2=m_3-2m_2+n=m_3-2n\sigma^2 - n.
\end{align}
Now \eqref{a2} yields $(\Delta+2)n\sigma^2=o\left((n\sigma^2)^{3/2}\right)$, whereas \eqref{a+} ensures $n=o\left( (n\sigma^2)^{3/2}\right)$. Therefore, \eqref{pf_lem_m3} implies the assertion.
\end{proof}
\begin{proof}[Proof of \cref{thm_sl_a+}] Let $x>0$ and let $k=\lfloor x \sqrt{n/\sigma^2}\rfloor$. Note that Assumption \eqref{a2} combined with \eqref{bounds_sl_gn} yields
\begin{align*}
\Prob(\SL_n(v_n)>x \sqrt{n/\sigma^2})=g_n(k) +o(1).
\end{align*}
Moreover, note that \cref{lem_m3} implies $k=o\left( (n^3/m_3)^{1/3}\right)$, whereas \eqref{a1} and \eqref{a+} imply $k=o\left( (n^2/m_2)^{2/3}\right)$. Therefore \cref{lem_asym_g} yields the assertion.
\end{proof}

\subsection{Moment convergence.} Next up is the proof of \cref{thm_moments} under assumption \eqref{a+}, that is:
\begin{proposition}\label{thm_moments_a+}
Assume \eqref{a0}, \eqref{b1}, \eqref{b2} and \eqref{a+}. Let $X$ be  standard Rayleigh distributed. Then
\begin{align*}
\lim_{n\rightarrow\infty} \E\left[\left(\frac{\SL_n(v_n)}{\sqrt{n /\sigma^2(\ds n)}}\right)^p\right]=\E[X^p],\quad p\geq 1.
\end{align*}
In particular, $\E[\SL_n(v_n)]\sim \sqrt{\frac {\pi n} {2\sigma^2(\ds n)}}$ and $\V(\SL_n(v_n))\sim \frac{4-\pi} {2\sigma^2(\ds n)} n $.
\end{proposition}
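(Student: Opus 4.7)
The plan is to upgrade the distributional convergence from \cref{thm_sl_a+} to moment convergence by establishing a uniform sub-Gaussian tail bound on $\SL_n(v_n)/\sqrt{n/\sigma^2}$. Since this random variable is nonnegative, the $p$-th moment can be written via the tail formula
\begin{align*}
\E\left[\left(\frac{\SL_n(v_n)}{\sqrt{n/\sigma^2}}\right)^p\right]
= p\int_0^{\sqrt{n\sigma^2}} x^{p-1}\,\Prob\!\left(\SL_n(v_n) > x\sqrt{n/\sigma^2}\right)dx,
\end{align*}
where the upper limit uses $\SL_n(v_n)\leq n$. By \cref{thm_sl_a+} the integrand converges pointwise to $px^{p-1}\mathrm{e}^{-x^2/2}$, and the integral of this limit over $(0,\infty)$ equals $\E[X^p]$; so it suffices to produce a dominating tail bound permitting dominated convergence.

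First I would combine \eqref{bounds_sl_gn} with \cref{lem_asym_g} to extend the Gaussian shape of $g_n(k)$ up to $k_0 := C_p\sqrt{(n\log n)/\sigma^2}$, where $C_p$ will be chosen later depending on $p$. The task is to check that at $k=k_0$ the error terms $k^{3/2}m_2/n^2$ and $k^3 m_3/n^3$ in \cref{lem_asym_g} are still $o(1)$. Using $m_2=n(\sigma^2+1)$ and an analogue of \cref{lem_m3} refined under \eqref{b2} to give $m_3=o\bigl((n\sigma^2)^{3/2}/(\log n)^{3/2}\bigr)$, a direct substitution shows that the $(\log n)^3$ factors appearing in \eqref{b1} and \eqref{b2} are exactly what is needed to absorb the extra $\log n$ coming from $k_0^2\sigma^2/n=C_p^2\log n$. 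Hence one obtains, for $n$ large and all $k\leq k_0$,
\begin{align*}
\Prob(\SL_n(v_n)>k)\;\leq\; 2\exp\!\left(-\frac{k^2\sigma^2}{2n}\right).
\end{align*}

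For $k>k_0$ I would use monotonicity of $k\mapsto\Prob(\SL_n(v_n)>k)$ together with the bound just established at $k=k_0$, giving $\Prob(\SL_n(v_n)>k)\leq 2n^{-C_p^2/2}$. Since $\SL_n(v_n)/\sqrt{n/\sigma^2}\leq \sqrt{n\sigma^2}$ and $\sigma^2=o(n)$ by \eqref{b1}, the contribution of the tail $x\geq C_p\sqrt{\log n}$ to the moment integral is at most a constant times $(n\sigma^2)^{p/2}\cdot n^{-C_p^2/2}$, which is $o(1)$ once $C_p$ is chosen large enough in terms of $p$. On the remaining range $x\in[0,C_p\sqrt{\log n}]$, the bound $\Prob(\SL_n(v_n)>x\sqrt{n/\sigma^2})\leq 2\mathrm{e}^{-x^2/2}$ provides an integrable dominant for $x^{p-1}$, so the pointwise limit from \cref{thm_sl_a+} together with dominated convergence delivers
\begin{align*}
\lim_{n\to\infty}\E\!\left[\left(\tfrac{\SL_n(v_n)}{\sqrt{n/\sigma^2}}\right)^p\right] = p\int_0^\infty x^{p-1}\mathrm{e}^{-x^2/2}\,dx = \E[X^p].
\end{align*}
The asymptotics $\E[\SL_n(v_n)]\sim\sqrt{\pi n/(2\sigma^2)}$ and $\V(\SL_n(v_n))\sim (4-\pi)n/(2\sigma^2)$ then follow from the moment formulas $\E[X]=\sqrt{\pi/2}$ and $\E[X^2]=2$ for the standard Rayleigh law.

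The main obstacle is the sub-Gaussian tail bound: I expect the bulk of the technical effort to lie in verifying that the sharpened assumptions \eqref{b1} and \eqref{b2} precisely suffice to push the validity of \cref{lem_asym_g} out to the threshold $k_0=\Theta(\sqrt{(n\log n)/\sigma^2})$. The cut-off at $\sqrt{\log n}$ rather than a constant is exactly what is needed to defeat the polynomial $(n\sigma^2)^{p/2}$ growth arising from the trivial upper bound $\SL_n(v_n)\leq n$ in Step 3, and any looser assumptions would fail to close this gap.
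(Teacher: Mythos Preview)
Your proposal is correct and follows essentially the same route as the paper. Both arguments hinge on the sub-Gaussian tail bound $\Prob(\SL_n(v_n)>k)\le C'\exp(-k^2\sigma^2/(2n))$ pushed out to $k=\Theta(\sqrt{(n\log n)/\sigma^2})$ via \cref{lem_asym_g}, and both need the refined third-moment bound $m_3=o\bigl((n\sigma^2/\log n)^{3/2}\bigr)$ (stated in the paper as \cref{lem_m3_moments}); the only cosmetic difference is that the paper packages the conclusion as ``$\sup_n\E[X_n^p]<\infty$ plus distributional convergence implies moment convergence via uniform integrability,'' whereas you apply dominated convergence directly to the tail-integral representation.
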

In preparation for the proof of \cref{thm_moments_a+}, we note the following.
\begin{lemma}\label{lem_m3_moments}
Assumptions \eqref{b2} and \eqref{a+} imply $m_3(\ds n) = o\left( \left(n\sigma^2(\ds n)/\log n\right)^{3/2}\right)$.
\end{lemma}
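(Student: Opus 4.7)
The plan is to mimic the proof of Lemma \ref{lem_m3} almost verbatim, using the stronger assumptions \eqref{b2} and \eqref{a+} in place of \eqref{a2} and \eqref{a+}, to gain the additional factor of $(\log n)^{-3/2}$ in the denominator.

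First I would reuse the elementary identity from the earlier proof: for any degree sequence satisfying \eqref{a0},
\begin{align*}
\Delta \cdot n \sigma^2 \;\geq\; \sum_{v\in[n]} d_v(d_v-1)^2 \;=\; m_3 - 2 m_2 + n \;=\; m_3 - 2 n \sigma^2 - n,
\end{align*}
which rearranges to
\begin{align*}
m_3 \;\leq\; (\Delta + 2)\, n \sigma^2 + n.
\end{align*}
It therefore suffices to show that each of the two terms on the right is $o\bigl((n\sigma^2/\log n)^{3/2}\bigr)$.

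For the first term, assumption \eqref{b2} gives $\Delta = o\bigl(\sqrt{n\sigma^2/(\log n)^3}\bigr)$, so
\begin{align*}
(\Delta + 2)\, n\sigma^2 \;=\; o\!\left( n\sigma^2 \cdot \sqrt{\frac{n\sigma^2}{(\log n)^3}} \right) \;=\; o\!\left( \left(\frac{n\sigma^2}{\log n}\right)^{3/2} \right).
\end{align*}
For the second term, assumption \eqref{a+} gives $n\sigma^2 = \omega(n^{2/3} \log n)$, so $(n\sigma^2)^{3/2} = \omega(n (\log n)^{3/2})$, which is exactly $n = o\bigl((n\sigma^2/\log n)^{3/2}\bigr)$. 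Combining the two bounds yields the claim.

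There is no real obstacle here: the structural inequality $m_3 \leq (\Delta+2) n\sigma^2 + n$ was already established in Lemma \ref{lem_m3}, and the statement to be proved is simply a quantitative refinement that follows by plugging the sharper hypotheses \eqref{b2} and \eqref{a+} into the same two-term bound.
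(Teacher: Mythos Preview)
Your proof is correct and follows exactly the approach the paper intends: it explicitly writes out the ``obvious changes'' to the proof of Lemma~\ref{lem_m3}, using \eqref{b2} in place of \eqref{a2} to handle the $(\Delta+2)n\sigma^2$ term and \eqref{a+} to handle the $n$ term.
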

\begin{proof}
Same as for \cref{lem_m3} up to some obvious changes.
\end{proof}

\begin{proof}[Proof of \cref{thm_moments_a+}]   Let $X_n=\SL_n(v_n)/\sqrt{n/\sigma^2}$ and let $X$ be standard Rayleigh distributed. First note that if $X_n$ converges in distribution to $X$ and
\begin{align}\label{cond_p_mom}
\sup_{n\in\N} \E[X_n^p]<\infty\quad\text{ for all }p\geq 1
\end{align}
then $\E[X_n^p]\rightarrow\E[X]$, since \eqref{cond_p_mom} and Markov's inequality imply that $(X_n^p)_{n\geq 0}$ is uniformly integrable. Hence, by \cref{thm_sl_a+} it is sufficient to show \eqref{cond_p_mom}.

To this end, note that \eqref{bounds_sl_gn} and Lemma \ref{lem_asym_g} imply for every $C>0$
\begin{align}
\Prob\left(X_n>x\right)\leq C^\prime \exp\left(-\frac{x^2} 2\right),\quad x\in\left[0,C\sqrt{\log n}\right], \label{pf_thm_moments_2}
\end{align}
for some constant $C^\prime$ which only depends on $C$. In particular, since $X_n\leq n$, 
\begin{align*}
\E\left[X_n^p \Ind_{\{X_n>C_p \sqrt{\log n}\}}\right]\leq n^p \Prob\left(X_n>C_p\sqrt{\log n}\right)=\bo(1)
\end{align*}
for $C_p=\sqrt{2p}$. Therefore
\begin{align*}
\E\left[X_n^p\right]=\int_0^{C_p\sqrt{\log n}} \Prob\left(X_n^p>x\right) \mathrm{d}x +\bo(1),
\end{align*}
which yields the assertion by \eqref{pf_thm_moments_2}.
\end{proof}
\subsection{Joint limit for tail- and six-length}  In this section we prove \cref{thm_taillength} under the additional assumption \eqref{a+}, that is:
\begin{proposition}\label{thm_taillength_a+} Let $X$ and $U$ be independent, $U$ be uniformly distributed on $[0,1]$, and $X$ be Rayleigh distributed. 
Assume \eqref{a0}, \eqref{b1}, \eqref{b2} and \eqref{a+}. Then
\begin{align*}
\left(\frac{\SL_n(v_n)}{\sqrt{n /\sigma^2(\ds n)}}\, ,\,\frac{\TL_n(v_n)}{\sqrt{n /\sigma^2(\ds n)}}\right)\stackrel{d}{\longrightarrow} (X, UX).
\end{align*}
\end{proposition}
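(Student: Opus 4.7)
My plan is to write down the joint probability $\Prob(\TL_n(v_n)=t,\SL_n(v_n)=k)$ explicitly by a counting argument, observe that conditional on $\SL_n(v_n)=k$ the tail-length is essentially uniform on $\{0,1,\ldots,k-1\}$, and then combine this with \cref{thm_sl_a+}. For $t\in\{1,\ldots,k-1\}$ and any $(i_1,\ldots,i_{k-1})\in J_{n,k-1}(v_n)$, the functions $F\in\mathfrak{F}(\ds n)$ with $F(v_n)=i_1$, $F(i_j)=i_{j+1}$ for $1\leq j\leq k-2$, and $F(i_{k-1})=i_t$ force one in-edge at each $i_j$ with $j\neq t$ and two in-edges at $i_t$. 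Repeating the counting of \cref{lem_dist_SL} yields
\begin{align*}
\Prob(\TL_n(v_n)=t,\SL_n(v_n)=k)=\frac{1}{\ff n k}\sum_{(i_1,\ldots,i_{k-1})\in J_{n,k-1}(v_n)}d_{i_t}(d_{i_t}-1)\prod_{j\neq t}d_{i_j},
\end{align*}
which by the symmetry of $J_{n,k-1}(v_n)$ under permutations of its coordinates does \emph{not} depend on $t\in\{1,\ldots,k-1\}$. An analogous count for $t=0$ (the case in which $v_n$ lies on the cycle) gives $\Prob(\TL_n(v_n)=0,\SL_n(v_n)=k)=\frac{d_{v_n}}{n-k+1}\Prob(\SL_n(v_n)>k-1)$.

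Consequently, conditional on $\SL_n(v_n)=k$, the tail-length is \emph{exactly} uniformly distributed on $\{1,\ldots,k-1\}$, with an additional atom $p_k:=\Prob(\TL_n(v_n)=0\mid \SL_n(v_n)=k)$ at $0$. The next step is to show that $p_k=o(1)$ uniformly for $k\geq a\sqrt{n/\sigma^2}$. Writing $\Prob(\SL_n(v_n)=k)=\Prob(\SL_n(v_n)>k-1)-\Prob(\SL_n(v_n)>k)$ and expanding to leading order via \cref{lem_asym_g} yields $\Prob(\SL_n(v_n)=k)\sim (k\sigma^2/n)\exp(-k^2\sigma^2/(2n))$ for $k=\Theta(\sqrt{n/\sigma^2})$, hence
\begin{align*}
p_k=\bo\!\left(\frac{d_{v_n}}{k\sigma^2}\right)=\bo\!\left(\frac{\Delta}{a\sqrt{n\sigma^2}}\right)=o(1)
\end{align*}
uniformly for $k\geq a\sqrt{n/\sigma^2}$, by assumption \eqref{a2}.

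To assemble the joint convergence, for $a>0$ and $c\in[0,1)$ I would write
\begin{align*}
\Prob\!\left(\SL_n(v_n)>a\sqrt{n/\sigma^2},\ \TL_n(v_n)>c\,\SL_n(v_n)\right)=\sum_{k>a\sqrt{n/\sigma^2}}\Prob(\SL_n(v_n)=k)\,\frac{(k-1-\lfloor ck\rfloor)(1-p_k)}{k-1}.
\end{align*}
After truncating the sum at $k\leq A\sqrt{n/\sigma^2}$ and controlling the residual tail via \cref{thm_sl_a+} by letting $A\to\infty$ at the end, the conditional factor converges uniformly to $1-c$; combined with the convergence $\SL_n(v_n)/\sqrt{n/\sigma^2}\to X$ from \cref{thm_sl_a+}, the right-hand side converges to $(1-c)\Prob(X>a)=\Prob(X>a,\,UX>cX)$, which determines the joint law of $(X,UX)$. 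The main obstacle is the uniform estimate $p_k=o(1)$: it requires extracting $\Prob(\SL_n(v_n)=k)$ as the difference of two $g_n$-values and carefully tracking the leading-order cancellation, which is where the error control of \cref{lem_asym_g} (and hence assumption \eqref{a+}) plays the essential role.
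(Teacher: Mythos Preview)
Your route to conditional uniformity---the explicit formula
\[
\Prob(\TL_n(v_n)=t,\SL_n(v_n)=k)=\frac{1}{\ff n k}\sum_{(i_1,\ldots,i_{k-1})\in J_{n,k-1}(v_n)}d_{i_t}(d_{i_t}-1)\prod_{j\neq t}d_{i_j}
\]
combined with the permutation symmetry of $J_{n,k-1}(v_n)$---is correct and arguably cleaner than the paper's argument. The paper instead constructs explicit bijections $\phi_i:\mathfrak{F}_{k,i}\to\mathfrak{F}_{k,i+1}$ that swap two edges along the six; your symmetry observation handles all $t$ at once.

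The gap is in your treatment of the atom at $0$. To get $p_k=o(1)$ pointwise you need a lower bound of order $k\sigma^2/n$ on $\Prob(\SL_n=k)$, and you propose to obtain it by differencing two values of $g_n$. But the multiplicative error in \cref{lem_asym_g} is $\bo(k^{3/2}m_2/n^2+k^3m_3/n^3)$, which for $k=\Theta(\sqrt{n/\sigma^2})$ and small $\sigma^2$ can dominate the difference. Concretely, take $\sigma^2=n^{-1/3}(\log n)^2$ (which satisfies \eqref{a+}): then $k\asymp n^{2/3}/\log n$, the error term $k^{3/2}m_2/n^2$ is of size $(\log n)^{-3/2}$, while the ``signal'' $g_n(k-1)-g_n(k)\asymp k\sigma^2/n$ is only of size $n^{-2/3}\log n$. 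Naive subtraction fails; to rescue it you would have to reopen the proof of \cref{lem_asym_g} and track how the error varies with $k$.

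The paper avoids this entirely by not going pointwise. In your final display the factor $(1-p_k)$ contributes, after expanding, a correction bounded by $\sum_k\Prob(\SL_n=k)\,p_k=\Prob(\TL_n=0)$, so it suffices to show $\Prob(\TL_n=0)=o(1)$. Summing your own formula $\Prob(\TL_n=0,\SL_n=k)=\frac{d_{v_n}}{n-k+1}\Prob(\SL_n>k-1)$ over $k\le n/2$ gives $\bo(d_{v_n}\E[\SL_n]/n)$, and the tail $k>n/2$ contributes $\bo\big(d_{v_n}\,n\,\Prob(\SL_n>n/2)\big)$; both are $o(1)$ by \eqref{b2} and \cref{thm_moments_a+}. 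This is exactly the content of the paper's \cref{lem_tn_0_bound} and \cref{lem_tn_0}, and it is both simpler and requires no control on $\Prob(\SL_n=k)$ at the level of individual $k$.
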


The joint limit of tail- and six-length will be established in two steps:
\begin{itemize}
\item Show that, conditioned on $\TL_n(v)>0$ and $\SL_n(v)=k$, $\TL_n(v)$ is uniformly distributed on $[k-1]$.
\item Show $\Prob(\TL_n(v)>0)\rightarrow 1$ as $n\rightarrow\infty$.
\end{itemize}
The first observation is true for every degree sequence:
\begin{lemma}\label{lem_unif_tail} Let $\ds n $ be any degree sequence with \eqref{a0}. Let $v$ be such that $\Prob(\TL_n(v)>0)>0$ (i.e.~$d_w>1$ for some $w\neq v$). Then, for every $k\geq 2$,  
\begin{align*}
\Prob(\TL_n(v) = j | \SL_n(v)=k,\TL_n(v)>0) =\frac 1 {k-1},\quad j\in[k-1].
\end{align*}
\end{lemma}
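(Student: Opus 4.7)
The plan is to write $\Prob(\SL_n(v)=k,\TL_n(v)=j)$ as an explicit sum for each $j \in [k-1]$, using the same kind of counting argument that underlies \cref{lem_dist_SL}, and then to argue by a permutation symmetry that the value of the sum is independent of $j$.

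First I would unpack the joint event: $\{\SL_n(v)=k,\TL_n(v)=j\}$ occurs iff there exists $(i_1,\ldots,i_{k-1}) \in J_{n,k-1}(v)$ such that $F(v)=i_1,\ F(i_1)=i_2,\ \ldots,\ F(i_{k-2})=i_{k-1}$ and $F(i_{k-1})=i_j$. This prescribes $F$ on the $k$-element set $\{v,i_1,\ldots,i_{k-1}\}$; the forced contribution to in-degrees is exactly $2$ at $i_j$ (one edge coming in from $i_{j-1}$ or from $v$ when $j=1$, and one from $i_{k-1}$) and $1$ at every other $i_\ell$. Counting the extensions by a multinomial, exactly as in the proof of \cref{lem_dist_SL}, and dividing by $n!/\prod_w d_w!$ would give
\begin{align*}
\Prob(\SL_n(v)=k,\TL_n(v)=j)=\frac{1}{\ff n k}\sum_{(i_1,\ldots,i_{k-1})\in J_{n,k-1}(v)} (d_{i_j}-1)\prod_{\ell=1}^{k-1} d_{i_\ell}.
\end{align*}

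The key step is then the symmetry of the right-hand side in $j$. The set $J_{n,k-1}(v)$ is invariant under permutations of its coordinates, and the factor $\prod_{\ell=1}^{k-1} d_{i_\ell}$ is symmetric. For any $j,j'\in[k-1]$, the transposition of coordinates $j$ and $j'$ therefore provides a bijection of $J_{n,k-1}(v)$ that turns $d_{i_j}-1$ into $d_{i_{j'}}-1$ while leaving the rest of the summand unchanged. Hence the sum, and so the probability, is the same for every $j\in[k-1]$. Summing over $j$ gives $\Prob(\SL_n(v)=k,\TL_n(v)>0)$, and the uniform distribution on $[k-1]$ follows on conditioning.

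I do not anticipate a real obstacle here; the argument is essentially forced by the symmetric structure of $J_{n,k-1}(v)$. The one place to be careful is the degree bookkeeping: the extra forced edge into $i_j$ is what produces the factor $d_{i_j}(d_{i_j}-1)$ (rather than $d_{i_j}$) after counting extensions, and after pulling out the common symmetric product it is precisely the residual $d_{i_j}-1$ that the coordinate-swapping bijection maps to $d_{i_{j'}}-1$, which is what makes the per-$j$ probabilities coincide.
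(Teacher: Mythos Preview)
Your argument is correct. The formula
\[
\Prob(\SL_n(v)=k,\TL_n(v)=j)=\frac{1}{\ff n k}\sum_{(i_1,\ldots,i_{k-1})\in J_{n,k-1}(v)} (d_{i_j}-1)\prod_{\ell=1}^{k-1} d_{i_\ell}
\]
follows exactly as you describe (and the edge case $j=k-1$, where $F(i_{k-1})=i_{k-1}$ is a loop, still contributes in-degree~$2$ at $i_{k-1}$ and fits the formula). The coordinate-swap on $J_{n,k-1}(v)$ then immediately gives independence of $j$.

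This is a genuinely different route from the paper's. The paper never writes down the joint probability; instead it builds explicit bijections $\phi_i:\mathfrak{F}_{k,i}\to\mathfrak{F}_{k,i+1}$ between sets of \emph{functions}, obtained by rewiring two edges of the six (with a separate construction for $i=k-2$ versus $i<k-2$). Your approach stays at the level of path labels, reuses the counting template of \cref{lem_dist_SL}, and exploits the permutation invariance of the index set $J_{n,k-1}(v)$ rather than a bijection on functional graphs. The payoff is a shorter, case-free argument that also yields the explicit joint probability as a by-product; the paper's bijective proof, on the other hand, is more pictorial and does not require computing any probabilities at all.
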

\begin{proof} The assertion is obviously true for $k=2$, since $\TL_n(v)\leq \SL_n(v)-1$ and thus $\TL_n(v)\in\{0,1\}$ if $\SL_n(v)=2$. 

Now let $k\geq 3$. It is sufficient to prove
\begin{align}\label{pf_unif_tn_1}
\Prob(\TL_n(v)=i, \SL_n(v)=k)=\Prob(\TL_n(v)=i+1,\SL_n(v)=k),\quad i\in[k-2],
\end{align}
since this implies $\Prob(\TL_n(v) = x | \SL_n(v)=k,\TL_n(v)>0)=\Prob(\TL_n(v) = y | \SL_n(v)=k,\TL_n(v)>0)$ for all $x,y\leq k-1$, yielding an uniform distribution on $[k-1]$.

In order to prove \eqref{pf_unif_tn_1}, let $\mathfrak{F}_{k,i}:=\{f\in \mathfrak{F}(\ds n):\SL_f(v)=k,\TL_f(v)=i\}$. Then \eqref{pf_unif_tn_1} is equivalent to 
\begin{align*}
|\mathfrak{F}_{k,i}|=|\mathfrak{F}_{k,i+1}|,\quad i\in[k-2],
\end{align*}
since the underlying random function $F$ is drawn uniformly at random from  $\mathfrak{F}(\ds n)$. We prove the equality above by finding bijections $\phi_i : \mathfrak{F}_{k,i}\rightarrow \mathfrak{F}_{k,i+1}$. First consider the case $i=k-2$: For $f\in\mathfrak{F}_{k,k-2}$ let $\phi_{k-2}(f)=g$ where $g$ is the function given by
\begin{align*}
g(x)=\begin{cases} f^{(k-1)}(v),&\text{ if } x=f^{(k-3)}(v),\\
f^{(k-2)}(v),&\text{ if } x=f^{(k-2)}(v),\\
f(x),&\text{ otherwise.}
\end{cases}
\end{align*}
The effect of $\phi_{k-2}$ on a functional graph is illustrated in \cref{subfig_phi_k}. 
It is not hard to check that $\phi_{k-2}(\mathfrak{F}_{k,k-2})\subseteq \mathfrak{F}_{k,k-1}$ Note that $\phi_{k-2}$ is invertible by choosing $\phi_{k-2}^{-1}(g):=h$,
\begin{align*}
h(x)=\begin{cases} g^{(k-1)}(v),&\text{ if } x=g^{(k-3)}(v),\\
g^{(k-2)}(v),&\text{ if } x=g^{(k-1)}(v),\\
g(x),&\text{ otherwise.}
\end{cases}
\end{align*}
Thus $\phi_{k-2}$ is a bijection and \eqref{pf_unif_tn_1} follows for $i=k-2$. A similar bijection works for $i<k-2$, as schematically shown in \cref{subfig_phi_i}. Details are left to the reader.
\end{proof}

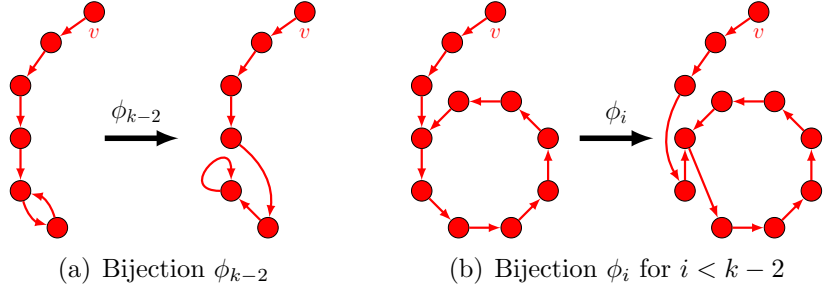
\begin{figure}\centering
\subfigure[Bijection $\phi_{k-2}$\label{subfig_phi_k}]{
\scalebox{0.7}{
\begin{tikzpicture}[every edge/.append style={-{latex},very thick, color=red}]

\node[fill=red,circle,draw] (a) at (1.4,3.4){};
\node[fill=red,circle,draw] (b) at (0.58,2.82){};
\node[fill=red,circle,draw] (c) at (0,2){};
\node[fill=red,circle,draw] (1) at (0,1){};
\node[fill=red,circle,draw] (2) at (0,0){};
\node[fill=red,circle,draw] (3) at (0.7,-0.7){};

\node[draw=none,fill=none] (lab) at (1.4,3){${\color{red} v}$};

\path (a) edge (b);
\path (b) edge (c);
\path (c) edge (1);
\path (1) edge (2);
\path (2) edge [bend right] (3); 
\path (3) edge [bend right] (2); 

\node[fill=red,circle,draw] (a) at (5.4,3.4){};
\node[fill=red,circle,draw] (b) at (4.58,2.82){};
\node[fill=red,circle,draw] (c) at (4,2){};
\node[fill=red,circle,draw] (1) at (4,1){};
\node[fill=red,circle,draw] (2) at (4,0){};
\node[fill=red,circle,draw] (3) at (4.7,-0.7){};

\node[draw=none,fill=none] (lab) at (5.4,3){${\color{red} v}$};

\path (a) edge (b);
\path (b) edge (c);
\path (c) edge (1);
\path (1) edge[bend left] (3);
\path (2) edge [out=180,in=90,looseness=8] (2);
\path (3) edge (2);
\draw[-{latex},line width=3pt] (1.6,1) to (3,1);
\node[draw=none,fill=none] (phi) at (2.2,1.5){{\bf\large$\phi_{k-2}$}};
\end{tikzpicture}
}
}
\hspace{0.5cm}
\subfigure[Bijection $\phi_i$ for $i<k-2$\label{subfig_phi_i}]{
\scalebox{0.7}{
\begin{tikzpicture}[every edge/.append style={-{latex},very thick, color=red}]
\node[fill=red,circle,draw] (a) at (1.4,3.4){};
\node[fill=red,circle,draw] (b) at (0.58,2.82){};
\node[fill=red,circle,draw] (c) at (0,2){};
\node[fill=red,circle,draw] (1) at (0,1){};
\node[fill=red,circle,draw] (2) at (0,0){};
\node[fill=red,circle,draw] (3) at (0.7,-0.7){};
\node[fill=red,circle,draw] (4) at (1.7,-0.7){};
\node[fill=red,circle,draw] (5) at (2.4,0){};
\node[fill=red,circle,draw] (6) at (2.4,1){};
\node[fill=red,circle,draw] (7) at (1.7,1.7){};
\node[fill=red,circle,draw] (8) at (0.7,1.7){};

\node[draw=none,fill=none] (lab) at (1.4,3){${\color{red} v}$};

\path (a) edge (b);
\path (b) edge (c);
\path (c) edge (1);
\path (1) edge (2);
\path (2) edge (3);
\path (3) edge (4);
\path (4) edge (5);
\path (5) edge (6);
\path (6) edge (7);
\path (7) edge (8);
\path (8) edge (1);

\node[fill=red,circle,draw] (at) at (6.4,3.4){};
\node[fill=red,circle,draw] (bt) at (5.58,2.82){};
\node[fill=red,circle,draw] (ct) at (5,2){};
\node[fill=red,circle,draw] (1t) at (5,1){};
\node[fill=red,circle,draw] (2t) at (5,0){};
\node[fill=red,circle,draw] (3t) at (5.7,-0.7){};
\node[fill=red,circle,draw] (4t) at (6.7,-0.7){};
\node[fill=red,circle,draw] (5t) at (7.4,0){};
\node[fill=red,circle,draw] (6t) at (7.4,1){};
\node[fill=red,circle,draw] (7t) at (6.7,1.7){};
\node[fill=red,circle,draw] (8t) at (5.7,1.7){};

\node[draw=none,fill=none] (labt) at (6.4,3){${\color{red} v}$};

\path (at) edge (bt);
\path (bt) edge (ct);

\path (ct) edge[out=240, in=120] (2t);
\path (2t) edge (1t);
\path (1t) edge (3t);

\path (3t) edge (4t);
\path (4t) edge (5t);
\path (5t) edge (6t);
\path (6t) edge (7t);
\path (7t) edge (8t);
\path (8t) edge (1t);
\draw[-{latex},line width=3pt] (3,1) to (4.4,1);
\node[draw=none,fill=none] (phi) at (3.7,1.5){{\bf\large$\phi_i$}};
\end{tikzpicture}
}
}
\caption{Bijections between $\mathfrak{F}_{k,i}$ and $\mathfrak{F}_{k,i+1}$.}
\end{figure}

In order to obtain $\Prob(\TL_n(v)>0)\rightarrow 1$, we first establish the following bound:
\begin{lemma}\label{lem_tn_0_bound} For every $v\in[n]$, as $n\rightarrow\infty$, 
\begin{align*}
\Prob(\TL_n(v)=0)=\bo\left(d_v\E\left[\SL_n(v)/n \right] + d_v n\Prob(\SL_n(v)>n/2)\right).
\end{align*}
\end{lemma}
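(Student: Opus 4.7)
The plan is to derive an exact formula for $\Prob(\TL_n(v)=0)$ as a sum over possible cycle lengths through $v$, and then bound that sum by splitting its range at $\lfloor n/2\rfloor$.

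First I would observe that $\{\TL_n(v)=0\}$ is precisely the event that $v$ lies on a cycle, and on this event $\SL_n(v)$ equals the length of that cycle. Hence
\[
\Prob(\TL_n(v)=0)=\sum_{k=1}^{n}\Prob(\SL_n(v)=k,\,\TL_n(v)=0).
\]

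Next, for each fixed $k\geq 1$ I would evaluate the summand by a counting argument parallel to the proof of \cref{lem_dist_SL}. The event $\{\SL_n(v)=k,\,\TL_n(v)=0\}$ is the disjoint union, over $(i_1,\ldots,i_{k-1})\in J_{n,k-1}(v)$, of the sets of functions realising the cyclic path $v\to i_1\to\cdots\to i_{k-1}\to v$. Counting such functions works exactly as in \cref{lem_dist_SL}, except that $v$ now also has one prescribed in-neighbour (namely $i_{k-1}$), so its multinomial contribution involves $(d_v-1)!$ in place of $d_v!$, producing an extra factor of $d_v$. Dividing by $|\mathfrak{F}(\ds n)|$ and comparing with \cref{lem_dist_SL} at index $k-1$ yields
\[
\Prob(\SL_n(v)=k,\,\TL_n(v)=0)=\frac{d_v}{\ff{n}{k}}\sum_{(i_1,\ldots,i_{k-1})\in J_{n,k-1}(v)}\prod_{j=1}^{k-1}d_{i_j}=\frac{d_v}{n-k+1}\Prob(\SL_n(v)\geq k).
\]

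Finally I would bound $\sum_{k=1}^{n}\frac{d_v}{n-k+1}\Prob(\SL_n(v)\geq k)$ by splitting at $\lfloor n/2\rfloor$. For $k\leq n/2$ the inequality $1/(n-k+1)\leq 2/n$ gives a contribution of at most $\frac{2d_v}{n}\sum_{k\geq 1}\Prob(\SL_n(v)\geq k)=\frac{2d_v}{n}\E[\SL_n(v)]$. For $k>n/2$ the monotonicity bound $\Prob(\SL_n(v)\geq k)\leq \Prob(\SL_n(v)>n/2)$ together with $1/(n-k+1)\leq 1$ and at most $n$ summands yields a contribution of at most $d_v n\,\Prob(\SL_n(v)>n/2)$. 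Adding the two pieces produces the claimed bound.

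The only step that is not entirely routine is the cycle count: one must track the extra factor $d_v$ coming from $v$'s double role as start and end of the cycle, which is the sole difference from \cref{lem_dist_SL}. The remainder of the argument is a standard tail-split combined with the identity $\E[\SL_n(v)]=\sum_{k\geq 1}\Prob(\SL_n(v)\geq k)$; a sanity check at $k=1$ recovers the correct self-loop probability $\Prob(f(v)=v)=d_v/n$.
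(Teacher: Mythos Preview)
Your proof is correct and follows essentially the same approach as the paper: both derive the exact identity $\Prob(\TL_n(v)=0,\SL_n(v)=k)=\dfrac{d_v}{n-k+1}\Prob(\SL_n(v)\geq k)$ by the same path-counting argument, then split the resulting sum at $n/2$ and use $\sum_{k\geq 1}\Prob(\SL_n(v)\geq k)=\E[\SL_n(v)]$. The only difference is cosmetic indexing (the paper writes the cycle length as $k+1$ rather than $k$).
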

\begin{proof}Note that $\Prob(\TL_n(v)=0)=\sum_{k=0}^{n-1} \Prob(\TL_n(v)=0,\SL_n(v)=k+1)$ where
\begin{align*}
\Prob(\TL_n(v)=0,\SL_n(v)=k+1)&=\sum_{(v_1,\ldots,v_{k})\in J_{n,k}(v)} \left(\prod_{j=1}^{k} \frac{ d_{v_j}}{n-j+1}\right) \frac {d_v} {n-k}\\
&=\frac {d_v} {n-k} \Prob(\SL_n(v)>k).
\end{align*}
For $k\leq n/2$ we get, uniformly in $k$, $\Prob(\TL_n(v)=0,\SL_n(v)=k+1)=\bo( d_v /n) \Prob(\SL_n(v)>k)$. For $k>n/2$ use $\frac { d_v} {n-k} \Prob(\SL_n(v)>k)\leq { d_v} \Prob(\SL_n(v)>n/2)$. Combining these bounds with $\sum_k \Prob(\SL_n(v)> k)=\E[\SL_n(v)]$ yields the assertion.
\end{proof}
\begin{corollary}\label{lem_tn_0}
Assume \eqref{a0}, \eqref{b1}, \eqref{b2} and \eqref{a+}. Then
$\lim\limits_{n\rightarrow\infty}\Prob(\TL_n(v_n)=0)=0$.
\end{corollary}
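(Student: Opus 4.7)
The plan is to apply \cref{lem_tn_0_bound} directly: it reduces the claim to showing that each of the two summands $d_{v_n}\E[\SL_n(v_n)]/n$ and $d_{v_n} n\,\Prob(\SL_n(v_n)>n/2)$ tends to zero. Using $d_{v_n}\le \Delta$ throughout, the task is to show both these quantities are $o(1)$ under the stated assumptions.

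For the first summand, I would invoke the mean asymptotic from \cref{thm_moments_a+}, namely $\E[\SL_n(v_n)] = O(\sqrt{n/\sigma^2})$, to rewrite the term as $O(\Delta/\sqrt{n\sigma^2})$. Assumption \eqref{b2} gives exactly $\Delta = o\bigl(\sqrt{n\sigma^2/(\log n)^3}\bigr)$, so $\Delta/\sqrt{n\sigma^2} = o\bigl((\log n)^{-3/2}\bigr) = o(1)$, handling the first term.

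The second summand is the main obstacle, since it requires a tail bound on $\SL_n(v_n)$ far beyond its typical scale $\sqrt{n/\sigma^2}$. Here I would reuse the Gaussian-type tail estimate \eqref{pf_thm_moments_2} from the proof of \cref{thm_moments_a+}: for every $C>0$ there is a constant $C'$ with $\Prob(X_n>x)\le C'e^{-x^2/2}$ uniformly for $x\in[0,C\sqrt{\log n}]$, where $X_n=\SL_n(v_n)/\sqrt{n/\sigma^2}$. The event $\SL_n(v_n)>n/2$ corresponds to $X_n>\tfrac{1}{2}\sqrt{n\sigma^2}$, and the role of \eqref{a+} is to make $\sqrt{n\sigma^2} = \omega(n^{1/3}\sqrt{\log n})$, which in particular exceeds $2C\sqrt{\log n}$ for any fixed $C$ once $n$ is large. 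Therefore one may apply \eqref{pf_thm_moments_2} at $x=C\sqrt{\log n}$ to deduce $\Prob(\SL_n(v_n)>n/2)\le C' n^{-C^2/2}$ for $n$ large.

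Finally, bounding $d_{v_n} n$ using \eqref{b2} together with \eqref{b1} gives $d_{v_n} n \le n\,\Delta = O\bigl(n^{3/2}\sqrt{\sigma^2/(\log n)^3}\bigr) = O\bigl(n^2/(\log n)^3\bigr)$, hence $d_{v_n} n\,\Prob(\SL_n(v_n)>n/2) = O\bigl(n^{2-C^2/2}/(\log n)^3\bigr)$. Choosing $C$ larger than $2$ (say $C=3$) makes this $o(1)$, concluding the proof. The only delicate point is ensuring the argument $\sqrt{n\sigma^2}/2$ lies in the allowed range of the tail bound \eqref{pf_thm_moments_2}, which is precisely where assumption \eqref{a+} is indispensable.
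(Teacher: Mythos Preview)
Your argument is correct and follows the same overall strategy as the paper: apply \cref{lem_tn_0_bound} and show that each of the two summands tends to zero. The treatment of the first summand via \cref{thm_moments_a+} and \eqref{b2} is identical to the paper's.

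For the second summand the two arguments differ only cosmetically. You invoke the Gaussian-type tail estimate \eqref{pf_thm_moments_2} directly at $x=C\sqrt{\log n}$ and use monotonicity, whereas the paper instead quotes the moment bound from \cref{thm_moments_a+} and applies Markov's inequality to obtain $\Prob(\SL_n(v)>n/2)=\bo\bigl((n\sigma^2)^{-p/2}\bigr)$ for any $p\ge 1$; then \eqref{a+} forces $n\sigma^2$ to grow polynomially, so $d_{v_n}n\,\Prob(\SL_n(v_n)>n/2)\to 0$ once $p$ is taken large enough. Since the moment bound in \cref{thm_moments_a+} is itself derived from \eqref{pf_thm_moments_2}, the two routes are essentially equivalent; the paper's version is a little more streamlined, while yours makes the role of \eqref{a+} more explicit.
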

\begin{proof}
\cref{thm_moments_a+} and assumption \eqref{b2} imply $d_v\E[\SL_n(v)/n]\rightarrow 0$. Moreover, by \cref{thm_moments_a+} and Markov's inequality
\begin{align*}
\Prob(\SL_n(v)>n/2)=\bo\left((n\sigma^2)^{-p/2}\right),\qquad p\geq 1.
\end{align*}
Thus condition \eqref{a+} implies $d_v n \Prob(\SL_n(v)>n/2)\rightarrow 0$. Therefore \cref{lem_tn_0_bound} yields the assertion.
\end{proof}

\begin{proof}[Proof of \cref{thm_taillength_a+}] Let $U$ be a uniformly on $[0,1]$ distributed random variable that is independent of $(\SL_n(v))_{n\geq 1}$. Moreover, let $\gamma_n=\sqrt{n/\sigma^2(\ds n)}$. 
Then, by Lemma \ref{lem_unif_tail},
\begin{align*}
&\Prob(\SL_n(v)>x\gamma_n,\TL_n(v)>y\gamma_n |\TL_n(v)>0)\\
& = \Prob(\SL_n(v)>x \gamma_n, \lceil U (\SL_n(v)-1)\rceil >y \gamma_n |\TL_n(v)>0).
\end{align*}
Moreover, by Lemma \ref{lem_tn_0} and since $\gamma_n\rightarrow\infty$,
\begin{align*}
&\Prob(\SL_n(v)>x\gamma_n, \lceil U (\SL_n(v)-1)\rceil >y\gamma_n |\TL_n(v)>0)\\
&=\Prob(\SL_n(v)>x\gamma_n, U \SL_n(v) >y\gamma_n)+o(1).
\end{align*}
Finally, Theorem \ref{thm_sixlength} and the independent choice of $U$ yield
\begin{align*}
\Prob(\SL_n(v)>x\gamma_n, U \SL_n(v) >y\gamma_n)\rightarrow \Prob(X>x, U X >y),
\end{align*}
which implies the joint convergence as claimed.
\end{proof}

\section{An extension to cases with small coalescence}\label{sec_small_sig}
In this section we discuss how to extend Theorem \ref{thm_sixlength} to degree sequences with small coalescence, that is sequences with $\sigma^2(\ds n)=\bo(n^{-1/3}\log n)$ and $n\sigma^2(\ds n)\rightarrow\infty$. The key idea is to contract edges incident to vertices with degree $1$
until we obtain a reduced graph that satisfies \eqref{a+}. The six-length of this reduced graph converges to a standard Rayleigh distribution by \cref{thm_sl_a+}. Finally, a concentration argument will allow us to deduce a limit theorem for the original graph.
\begin{defn}\label{def_kernel} Let $\ds n$ be a degree sequence and let $\hat n =\lfloor (n\sigma^2(\ds n))^{4/3} \rfloor$. Let $w$ be a vertex. The \emph{$w$-reduction} of a functional graph  $\mathcal{G}$  is the graph $\mathcal{G}_w$ obtained as follows: If $\hat n\geq n$ let $\mathcal{G}_w=\mathcal{G}$. Otherwise, $\mathcal{G}_w$ is obtained as follows: 
Let $k=n-\hat n$. Let $v_1,\ldots,v_k$ be  $k$ of the degree $1$ vertices in $[n]\setminus\{w\}$, chosen using any canonical method. (Note that there are more than $k$ vertices with degree $1$ by the choice of $k$ and the fact that $n\sigma^2=\sum_j (d_j-1)^2\geq n- \#\{j:d_j=1\}$.) Then do the following for $i=1,\ldots,k$:
\begin{itemize}
\item[(i)]  If $v_iv_i$ is an edge in the graph, then delete $v_iv_i$. Otherwise, replace the two edges $xv_i$ and $v_iy$ incident to $v_i$ by a single edge $xy$;
\item[(ii)] Delete the vertex $v_i$ from the graph.
\end{itemize}
Let $V_w=[n]\setminus\{v_1,\ldots,v_k\}$. Finally, let $\ds {n,w}$ denote the degree sequence of $\mathcal{G}_w$, that is $\ds {n,w}=(d_v)_{v\in V_w}$.
\end{defn}
\begin{remark}
Note that $4/3$ in the definition of $\hat n$ is somewhat arbitrary; the proof works equally well for a range of similar numbers. Also note that $\hat n=o(n)$ for degree sequences with $\sigma^2(\ds n)=o\left(n^{-1/4} \right)$. Finally, note that $\hat n \rightarrow \infty$ as $n\rightarrow\infty$ for any degree sequence with \eqref{a1}.
\end{remark}
\begin{remark}\label{rem_kernel} Suppose $\ds n$ is a degree sequence with \eqref{a1} and \eqref{a2} (or \eqref{b1} and \eqref{b2} respectively), which does not satisfy \eqref{a+}.
Note that $\mathcal{G}_w$ is a functional graph with $\hat n$ vertices and with
\begin{align}\label{eqn_nsigma2}
 \hat n \sigma^2(\ds {n,w})=\sum_{v\in [n]\setminus V_w} (d_v-1)^2=\sum_{v\in [n]} (d_v-1)^2 =n \sigma^2(\ds n).
\end{align}
In particular, $\sigma^2(\ds {n,w})\sim  {\hat n}^{-1/4} $ by the choice of $\hat n$ and therefore $\ds {n,w}$ satisfies \eqref{a+}. Moreover \eqref{eqn_nsigma2} and $\Delta(\ds {n,w})=\Delta(\ds n)$ imply that $\ds {n,w}$ also satisfies \eqref{a1} and \eqref{a2} (or \eqref{b1} and \eqref{b2} respectively).
\end{remark}
\begin{defn}\label{def_blow_up} Let $V^\prime\subset [n]$ and let $G=(V^\prime,E^\prime)$ be a functional graph. An \emph{$n$-extension} of $G$ is a graph $H$ with vertex set $[n]$ which is generated according to the following procedure:
\begin{itemize}
\item[(1)] Start with $V_0=V^\prime$ and $E_0=E^\prime$ and $i=0$.
\item[(2)]{ Let $w$ be the smallest element in $[n]\setminus V_i$. Let $X_w=1$ with probability $1/(|E_i|+1)$ and let $X_w=0$ otherwise. Then do the following:
\begin{itemize}
\item[(a)] If $X_w=1$, add $w$ to the graph as an isolated vertex with a single loop, that is $V_{i+1}=V_i\cup\{w\}$ and $E_{i+1}=E_i\cup\{ ww\}$.
\item[(b)] If $X_w=0$, choose an edge $xy\in E_i$ uniformly at random. Set $V_{i+1}=V_i\cup \{w\}$ and $E_{i+1}=(E_i\setminus \{xy\})\cup\{xv,vw\}$. 
\end{itemize}
\item[(3)] If $V_i=[n]$ set $H=(V_i,E_i)$. Otherwise, increase $i$ by one and return to step 2.
}
\end{itemize}
\end{defn}

\begin{lemma}\label{lem_blow_up} Let $\ds n$ be a degree sequence, $w\in[n]$, and let $\ds {n,w}$ be as in Definition \ref{def_kernel}. If $\mathcal{G}_w$ is a random functional graph with degree sequence  $\ds {n,w}$, then an $n$-extension of $\mathcal{G}_w$ is a random functional graph with degree sequence $\ds n$.
\end{lemma}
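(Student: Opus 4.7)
The plan is to show that sampling $G$ uniformly from $\mathfrak{F}(\ds {n,w})$ and then taking an $n$-extension produces the uniform distribution on $\mathfrak{F}(\ds n)$. The argument is essentially bijective: for each $H\in\mathfrak{F}(\ds n)$ I would identify a unique generating pair consisting of a graph $G$ and a sequence of choices in the extension procedure that produces $H$, and then verify that this sequence has a probability that does not depend on $H$.

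First I would check that any $n$-extension of a graph in $\mathfrak{F}(\ds {n,w})$ lies in $\mathfrak{F}(\ds n)$: a newly inserted vertex acquires in-degree exactly $1$ (from its loop in case (a), from the edge directed into it by the subdivision in case (b)), and the in-degrees of the pre-existing vertices are preserved since subdivision simply reroutes one incoming edge through the new vertex. The added vertices form exactly $[n]\setminus V_w$, which are the canonically-selected degree-$1$ vertices, so the final degree sequence is $\ds n$. Next, for each $H\in\mathfrak{F}(\ds n)$ the generating pair $(G,\vec X)$ is determined as follows. Because the canonical method depends only on $\ds n$, it selects the same set $[n]\setminus V_w$ from $H$ as from any other member of $\mathfrak{F}(\ds n)$, so $G$ is forced to be the $w$-reduction of $H$. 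Each step of the extension is similarly recoverable: when vertex $w_i$ (the $(i+1)$-th smallest element of $[n]\setminus V_w$) is to be added, one reads off from $H$ whether its out-edge is a loop (giving $X_{w_i}=1$) or otherwise which edge in the current intermediate graph is subdivided by $w_i$.

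Finally I would compute the probability of this unique choice sequence. At step $i$ the intermediate graph has $\hat n + i$ edges, hence $\hat n + i + 1$ equiprobable outcomes (a loop, or one of the $\hat n + i$ edges), and so the joint probability of the full sequence is
\begin{align*}
\prod_{i=0}^{k-1}\frac{1}{\hat n + i + 1} \;=\; \frac{\hat n!}{n!}.
\end{align*}
Multiplying by the uniform prior $1/|\mathfrak{F}(\ds {n,w})|$ on $G$ gives every $H\in\mathfrak{F}(\ds n)$ the same total probability $\hat n!/(n!\,|\mathfrak{F}(\ds {n,w})|)$. Using the counting formula $|\mathfrak{F}(\vec d)| = n!/\prod_j d_j!$ together with the fact that all removed vertices have degree $1$, this common value equals $1/|\mathfrak{F}(\ds n)|$, which both confirms uniformity and implicitly checks that the probabilities sum to one. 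The main obstacle is purely bookkeeping -- verifying that the $w$-reduction is a deterministic one-sided inverse of the $n$-extension on the canonically-selected vertex set -- and no probabilistic subtlety arises beyond counting.
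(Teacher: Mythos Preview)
Your proof is correct and follows essentially the same approach as the paper: identify the $w$-reduction $H_w$ as the unique preimage $G$, observe that there is exactly one sequence of choices in the extension procedure that yields a given $H$, and compute its probability as $\prod_{j=1}^{n-\hat n}1/(\hat n+j)$. Your write-up is somewhat more explicit than the paper's (you spell out why the extension lands in $\mathfrak{F}(\ds n)$ and verify the final constant via the counting formula $|\mathfrak{F}(\vec d)|=n!/\prod_j d_j!$, whereas the paper simply notes that all $H$ receive equal mass and lets normalisation do the rest), but the underlying argument is the same.
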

\begin{proof}Let $H$ be any functional graph with degree sequence $\ds n$ and let $\mathcal{H}$ denote the $n$-extension of $\mathcal{G}_w$. The claim is that $\Prob(\mathcal{H}=H)=1/|\mathfrak{F}(\ds n)|$.

Since $H$ can only be an $n$-extension of $\mathcal{G}_w$ if $\mathcal{G}_w=H_w$, it is sufficient to show that all possible $n$-extensions of a graph $G$ are equally likely. But since there is exactly one way of choosing edges in (2) throughout the procedure that leads to a particular graph $H$, we have
\begin{align*}
\Prob(\mathcal{H}=H| \mathcal{G}_w=H_w)=\prod_{j=1}^{n-n_w}\frac 1 {n_w+j}
\end{align*}
and the assertion follows.
\end{proof}
\begin{defn}\label{def_polya}
A classical $(a,b)$-P{\'o}lya urn scheme is an urn initialized with $a$ red and $b$ blue balls which evolves in discrete time as follows: In each time step $n$ draw a ball from the urn at random and put it back together with another ball of the same colour.

Let $\mathcal{R}(n,a,b)$ denote the number of red balls after adding $n$ balls to the urn.
\end{defn}
\begin{corollary}\label{coro_SL_polya} Let $\ds {n,w}$ be as in Definition \ref{def_kernel} and
let $\SL_{n,w}(w)$ be the six-length of $w$ in a random functional graph with degree sequence $\ds {n,w}$. Then
\begin{align*}
\SL_n(w)\stackrel{d}{=} \mathcal{R}(n-\hat n,\SL_{n,w}(w),\hat n+1-\SL_{n,w}(w)),
\end{align*}
where $\{\mathcal{R}(n,a,b): a,b,n\in\N_0\}$ is independent of $\SL_{n,w}(w)$ and distributed as in Definition~\ref{def_polya}.
\end{corollary}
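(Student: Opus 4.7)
By \cref{lem_blow_up} we may realise the random functional graph $\mathcal G$ with degree sequence $\ds n$ as an $n$-extension of a random functional graph $\mathcal G_w$ with degree sequence $\ds{n,w}$. Writing $L := \SL_{n,w}(w)$ for the six-length of $w$ in $\mathcal G_w$ (which has $\hat n$ edges), the plan is to track the six-path of $w$ through each of the $n - \hat n$ extension steps and identify its length with the red-ball count of a P\'olya urn initialised with $L$ red and $\hat n + 1 - L$ blue balls.

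The key observation is that a single extension step has exactly three possible effects on the current six-path of $w$. In case~(a), the added vertex becomes an isolated loop component, so it plays no role in $w$'s six-path. In case~(b), the subdivided edge either lies on the current six-path, in which case exactly one new vertex is inserted into the deterministic walk $w, f(w), f^2(w), \ldots$ before its first repetition and the six-length increases by one; or it lies off the six-path, in which case the six-path is unaffected. In all three cases the total edge count increases by one. Moreover, an on-path edge that is not subdivided at a given step remains on the path at the next step, since modifying any other edge neither removes it from the graph nor alters the traversal from $w$ up to the first repeat; in particular the on-path edge count is non-decreasing.

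Let $e_{i-1}$ and $k_{i-1}$ denote, respectively, the total edge count and the on-path edge count just before the $i$th extension step. Then the probability of the on-path-subdivision case is $(e_{i-1}/(e_{i-1}+1)) \cdot (k_{i-1}/e_{i-1}) = k_{i-1}/(e_{i-1}+1)$. Setting $R_{i-1} := k_{i-1}$ and $N_{i-1} := e_{i-1} + 1$, we have $R_0 = L$, $N_0 = \hat n + 1$, and at each step $R$ increases by one with probability $R_{i-1}/N_{i-1}$ while $N$ deterministically grows by one; this is precisely the transition rule of an $(L, \hat n + 1 - L)$-P\'olya urn. Running the extension for $n - \hat n$ steps therefore gives $\SL_n(w) = k_{n-\hat n} = R_{n-\hat n}$, distributed as $\mathcal R(n-\hat n, L, \hat n + 1 - L)$. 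Since the urn evolution is driven by the independent coin-flips used in the extension procedure, it is independent of $L$ in the required sense; the trivial case $\hat n \geq n$ corresponds to zero extension steps with $\mathcal R(0,a,b)=a$.

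The main thing to verify carefully is the case analysis above, i.e.\ that (a), (b)-off-path and (b)-on-path exhaust the effects of a single extension step on the pair $(k_i, e_i)$, and that on-path status is preserved when other edges are modified. Both assertions are, however, immediate from the definition of the six-path as a deterministic walk from $w$ and from the local nature of the $n$-extension procedure, so I expect no substantive obstacle beyond bookkeeping.
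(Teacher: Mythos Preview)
Your argument is correct and follows essentially the same route as the paper's proof: identify the edges on the six-path of $w$ with red balls, and the remaining edges together with a ``phantom'' edge (accounting for the loop-creation case~(a)) with blue balls, so that each extension step is exactly one draw from a classical P\'olya urn. Your explicit three-case analysis and the probability computation $(e_{i-1}/(e_{i-1}+1))\cdot(k_{i-1}/e_{i-1})=k_{i-1}/(e_{i-1}+1)$ simply unpack what the paper compresses into the single sentence about the phantom edge; the content is the same.
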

\begin{proof}
Identify edges contributing to the six-length $\SL_{n,w}(w)$ with red balls and all other edges (including a 'phantom' edge for step 2a in Definition~\ref{def_blow_up}) with blue balls in a P{\'o}lya urn. Then the dynamics described in Definition~\ref{def_blow_up} is equivalent to the procedure of drawing from a P{\'o}lya urn. Therefore Lemma~\ref{lem_blow_up} implies the assertion.
\end{proof}

\begin{lemma}\label{lem_tail_polya} Let $\mathcal{R}(n,a,b)$ be as in Definition~\ref{def_polya} and let $\mu(n,a,b)=a(1+n/(a+b))$. Then
\begin{align*}
\Prob\left(|\mathcal{R}(n,a,b)-\mu(n,a,b)|\geq t \mu(n,a,b)\right)\leq 2\exp\left(-\frac{t^2 a^2} {8(a+b)}\right).
\end{align*}
\end{lemma}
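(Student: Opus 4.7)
The plan is to apply the Azuma--Hoeffding inequality to the natural P{\'o}lya martingale. Set $M_n=\mathcal{R}(n,a,b)/(a+b+n)$, and observe that $(M_n)$ is a bounded martingale with $M_0=a/(a+b)$: conditional on having $r$ red balls after $n-1$ draws, the next ball is red with probability $r/(a+b+n-1)$, so $\E[\mathcal{R}(n,a,b)\mid\mathcal{F}_{n-1}]=r(a+b+n)/(a+b+n-1)$, and dividing by $a+b+n$ recovers $M_{n-1}$. Immediate consequences are $\E[\mathcal{R}(n,a,b)]=(a+b+n)\cdot a/(a+b)=\mu(n,a,b)$ and the translation identity $|\mathcal{R}(n,a,b)-\mu(n,a,b)|=(a+b+n)|M_n-M_0|$.

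The next step is to bound the martingale increments. Writing $\mathcal{R}(n,a,b)=\mathcal{R}(n-1,a,b)+\xi_n$ with $\xi_n\in\{0,1\}$, one computes
\[
M_n-M_{n-1}=\frac{(a+b+n-1)\xi_n-\mathcal{R}(n-1,a,b)}{(a+b+n)(a+b+n-1)},
\]
and since $0\le\mathcal{R}(n-1,a,b)\le a+b+n-1$, its absolute value is at most $1/(a+b+n)$.

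Now Azuma--Hoeffding yields
\[
\Prob\bigl(|M_n-M_0|\ge s\bigr)\le 2\exp\!\left(-\frac{s^2}{2\sum_{k=1}^n(a+b+k)^{-2}}\right),
\]
and the integral comparison $\sum_{k=1}^n(a+b+k)^{-2}\le \int_{a+b}^\infty x^{-2}\,\mathrm{d}x=(a+b)^{-1}$ (or a suitably crude variant thereof) bounds the denominator \emph{independently of $n$}. Substituting $s=t\mu(n,a,b)/(a+b+n)=ta/(a+b)$ and using the translation from $M$ back to $\mathcal{R}$ produces a tail bound of the claimed form; the constant $8$ in the statement absorbs whatever slack is introduced by the particular form of Azuma used and by the bound on $\sum c_k^2$. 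The edge cases $a=0$ (where $\mu=0$ and the claim is trivial) and $b=0$ (where $\mathcal{R}(n,a,0)=a+n=\mu$ is deterministic) can be dispatched separately.

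I do not anticipate any serious obstacle; the argument is the standard martingale concentration technique for P{\'o}lya urns. The only real point of care is uniformity in $n$: one must use the increment bound $1/(a+b+k)$ rather than a cruder $k$-independent bound, so that $\sum_{k\ge 1}(a+b+k)^{-2}$ converges and the tail bound has no $n$-dependence. This is consistent with the fact that $M_n$ converges almost surely to a $\mathrm{Beta}(a,b)$ random variable, so the concentration of $\mathcal{R}(n,a,b)/(a+b+n)$ around $a/(a+b)$ cannot improve indefinitely with $n$.
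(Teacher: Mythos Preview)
Your proposal is correct and follows essentially the same approach as the paper: form the P{\'o}lya martingale $M_k=\mathcal{R}(k,a,b)/(a+b+k)$, bound its increments, and apply Azuma--Hoeffding. The only difference is cosmetic: the paper uses the slightly cruder bound $|M_{k+1}-M_k|\le 2/(k+1+a+b)$, which is what produces the constant $8$ exactly, whereas your sharper bound $1/(a+b+k)$ would in fact give a constant $2$.
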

\begin{proof}
 Let
\begin{align*}
M_{k}:=\frac{\mathcal{R}(k,a,b)}{k+a+b},\quad k\geq 0.
\end{align*}
It is not hard to check that $(M_{k})_{k\geq 0}$ is a martingale. 
Since $|\mathcal{R}(k+1,a,b)-\mathcal{R}(k,a,b)|\leq 1$ and $\mathcal{R}(k,a,b)\leq a+k$, one obtains
\begin{align*}
|M_{k+1}-M_{k}|\leq \frac 2{k+1+a+b}.
\end{align*}
Therefore, the Azuma-Hoeffding inequality yields the assertion.
\end{proof}
We end the section with the missing proofs for Theorems \ref{thm_sixlength}, \ref{thm_moments}, and \ref{thm_taillength}. Note that we may assume w.l.o.g.~that
\begin{align}\label{a-}
\sigma^2(\ds n)=\bo\left(n^{-1/3} \log^2 n\right)\tag{A-},
\end{align}
since the other case is covered by the proofs in Section \ref{sec_proofs}.

\begin{proof}[Proof of \cref{thm_sixlength}] Let $X_n:=\SL_n(v_n)/\sqrt{n/\sigma^2(\ds n)}$ and let $X$ be standard Rayleigh distributed.
The claim is that $X_n$ converges in distribution to $X$.
 By \cref{thm_sl_a+} this holds for degree sequences with \eqref{a+} and thus, we may assume \eqref{a-}.
 
  Let $w=v_n$. Let $\SL_{n,w}(w)$ and $\mathcal{R}(n-\hat n,\SL_{n,w}(w),\hat n+1-\SL_{n,w}(w))$ be as in Corollary~\ref{coro_SL_polya}. Moreover, let $X_{n,w}=\SL_{n,w}(w)/\sqrt{\hat n /\sigma^2(\ds {n,w})}$. Note that
\begin{itemize}
\item[(a)] $X_{n,w}$ converges in distribution to $X$ by \cref{thm_sl_a+} and Remark~\ref{rem_kernel};
\item[(b)]{ $(\SL_{n,w}(w))^2/\hat n\rightarrow \infty$ in probability by (a) and $\sigma^2(\ds {n,w})\sim {\hat n}^{-1/4}$. Hence, using the tail bound in Lemma~\ref{lem_tail_polya} with arbitrary  constant $t>0$,
\begin{align*}
\frac{\mathcal{R}(n-\hat n,\SL_{n,w}(w),\hat n+1-\SL_{n,w}(w))}
{\SL_{n,w}(w)(n+1)/(\hat n+1)}\stackrel{\Prob}{\longrightarrow} 1,
\end{align*}
where $\stackrel{\Prob}{\longrightarrow}$ denotes convergence in probability.
}
\end{itemize} 
Moreover, Corollary~\ref{coro_SL_polya} and $n \sigma^2(\ds n)=\hat n \sigma^2(\ds {n,w})$ (see Remark~\ref{rem_kernel}) imply
\begin{align}\label{pf_thm_sl_x}
X_n\stackrel{d}{=}
\frac{\mathcal{R}(n-\hat n,\SL_{n,w}(w), \hat n+1-\SL_{n,w}(w))}
{\SL_{n,w}(w)(n+1)/(\hat n+1)} X_{n,w} \left(1+o(1)\right),
\end{align}
where $\stackrel{d}{=}$ denotes equality in distribution.
It is not hard to check, e.g.~with Slutsky's Theorem, that \eqref{pf_thm_sl_x}, (a) and (b) imply the assertion. Details are left to the reader.
\end{proof}

\begin{proof}[Proof of \cref{thm_moments}] Let $X_n$, $X_{n,w}$ and $X$ be as in the previous proof. As in the proof of \cref{thm_moments_a+} it is sufficient to show that
\begin{align}\label{pf_thm_moments}
\sup_{n\in\N} \E[X_n^p]<\infty,\quad p\geq 1,
\end{align}
since this bound combined with \cref{thm_sixlength} implies $\E[X_n^p]\rightarrow \E[X^p]$ for all $p\geq 1$. Note that $\sup_n \E[X_{n,w}^p]<\infty$ by \cref{thm_moments_a+} and Remark~\ref{rem_kernel}. 

Now let $A_n:=\{\SL_{n,w}(w)\geq \sqrt{\hat n +1}\}$. With the coupling in \cref{coro_SL_polya}
it is not hard to check that
\begin{align*}
\E[X_n^p |A_n^c]\leq \E[X_n^p | A_n],\quad p\geq 1.
\end{align*}
Thus, since $\Prob(A_n)\rightarrow 1$ by \cref{thm_sixlength}, it is sufficient to show
\begin{align*}
\sup_{n\in\N} \E\left[X_n^p {\bf 1}_{A_n}\right]<\infty,\quad p\geq 1.
\end{align*}
Moreover, by \eqref{pf_thm_sl_x} and $\sup_n \E[X_{n,w}^p]<\infty$ it is sufficient to show that 
\begin{align*}
\sup_{n\in\N}\E\left[\left(
\frac{\mathcal{R}(n-\hat n,\SL_{n,w}(w), \hat n+1-\SL_{n,w}(w))}
{\SL_{n,w}(w)(n+1)/(\hat n+1)}\right)^p {\bf 1}_{A_n}\right]<\infty,\quad p\geq 1,
\end{align*}
which is a consequence of the tail bound in \cref{lem_tail_polya}. Therefore \eqref{pf_thm_moments} holds and the convergence of all moments follows.
\end{proof}
\begin{proof}[Proof of \cref{thm_taillength}] Once again, we may assume w.l.o.g.~that \eqref{a-} holds. Note that we may copy the proof of \cref{thm_taillength_a+} provided we establish
\begin{align}\label{pf_thm_tl}
\Prob(\TL_n(v_n)=0)\rightarrow 0.
\end{align}
Now let $w=v_n$ and let $\TL_{n,w}(w)$ denote the tail-length of $w$ in the $w$-reduction of the functional graph. Note that $\TL_n (w)=0$ if and only if $\TL_{n,w}(w)=0$. 
Since $\Prob(\TL_{n,w}(w)=0)\rightarrow 0$ by \cref{lem_tn_0}, we obtain \eqref{pf_thm_tl}.
Therefore, the assertion follows using the same proof strategy as for \cref{thm_taillength_a+}.
\end{proof}

\noindent
{\bf Acknowledgement.\ } The authors are grateful to Igor Shparlinksi for suggesting this topic of study.


\begin{thebibliography}{99}
\bibitem{AB82}Arney, J.~and Bender, E.  
Random mappings with constraints on coalescence and number of origins.
{\it Pacific Journal of Mathematics} {\bf 103} (2), 269--294, 1982.

\bibitem{BHJ} Barbour, A.D., Holst, L.~and Janson, S. {\em Poisson
Approximation}, Clarendon Press, Oxford, 1992.


\bibitem{BrPo81} Brent, R.P.~and Pollard, J.
Factorization of the eighth Fermat number.
{\it Mathematics of Computation} {\bf 36}, 627--630, 1981.

\bibitem{HJ08}
Hansen, J.~C.~and Jaworski, J.
Random mappings with exchangeable in-degrees.
{\it Random Structures \& Algorithms} {\bf 33}, 105--126, 2008.

\remove{
\bibitem{HJ14}
Hansen, J.~C.~and Jaworski, J.
Structural transition in random mappings
{\it The Electronic Journal of Combinatorics} {\bf 21}, P1--18, 2014.
}


 \bibitem{KLMSS16}Konyagin, S.V., Luca, F., Mans, B., Mathieson, L., Sha, M.~and Shparlinski, I.E.
 Functional graphs of polynomials over finite fields.
 {\it Journal of Combinatorial Theory, Series B} {\bf 116}, 87--122, 2016.

\bibitem{MP16}
Martins, R. SV.~and Panario, D. 
On the heuristic of approximating polynomials over finite fields by random mappings. {\it International Journal of Number Theory} {\bf 12} (07), 1987-2016, 2016.

\bibitem{Po75}
Pollard, J.~M. 
A Monte Carlo method for factorization.
{\it BIT Numerical Mathematics} {\bf 15} (3), 331--334, 1975.


\end{thebibliography}
\end{document}